\newtheorem{lemma}{Lemma}
\newtheorem{theorem}{Theorem}
\newtheorem{example}{Example}
\newtheorem{corollary}{Corollary}
\def\R{\mathbb R}
\def\N{\mathbb N}
\def\P{\mathbb P}
\def\J{\mathcal J}
\def\p{\partial}
\DeclareMathOperator{\supp}{supp}
\newcommand{\pair}[1]{\left\langle #1 \right\rangle}
\newcommand{\norm}[1]{\left\|#1 \right\|}
\newcommand{\tnorm}[1]{{\left\vert\kern-0.25ex\left\vert\kern-0.25ex\left\vert #1 
		\right\vert\kern-0.25ex\right\vert\kern-0.25ex\right\vert}}
\let\div\relax
\DeclareMathOperator{\div}{div}
\def\grad{\nabla}
\date{\today}
\title[Unique continuation for the Helmholtz equation using stabilized FEM]{Unique continuation for the Helmholtz equation using stabilized finite element methods}
\author{Erik Burman}
\author{Mihai Nechita}
\author{Lauri Oksanen}
\address{Department of Mathematics, University College London, Gower Street, London UK, WC1E 6BT.}
\email{\{e.burman, mihai.nechita.16, l.oksanen\}@ucl.ac.uk}
\begin{document}

\begin{abstract}
	In this work we consider the computational approximation of a unique continuation problem for the Helmholtz equation using a stabilized finite element method. First conditional stability estimates are derived for which, under a convexity assumption on the geometry, the constants grow at most linearly in the wave number. Then these estimates are used to obtain error bounds for the finite element method that are explicit with respect to the wave number. Some numerical illustrations are given.
\end{abstract}	
	
\maketitle

\section{Introduction}

We consider a unique continuation (or data assimilation) problem for the Helmholtz equation
\begin{align}\label{helholtz_intro}
\Delta u + k^2 u = -f,
\end{align}
and introduce a stabilized finite element method (FEM) to solve the problem computationally. 
Such methods have been previously studied for Poisson's equation in \cite{BurmanSIAM}, \cite{BurmanComptes} and \cite{BHL2018}, and for the heat equation in \cite{BurmanOksanen}. 
The main novelty of the present paper is that our method is robust with respect to the wave number $k$, and we prove convergence estimates with explicit dependence on $k$, see \cref{L2error} and \cref{H1error} below. 

An abstract form of a unique continuation problem is as follows. 
Let $\omega \subset B \subset \Omega$ be open, connected and non-empty sets in $\R^{1+n}$ and suppose that $u \in H^2(\Omega)$ satisfies (\ref{helholtz_intro}) in $\Omega$.
Given $u$ in $\omega$ and $f$ in $\Omega$, find $u$ in $B$.

This problem is non-trivial since no information on the boundary $\p \Omega$ is given. 
It is well known, see e.g. \cite{IsakovBook},
that if $\overline{B \setminus \omega} \subset
\Omega$ then the problem is conditionally H\"older stable: for all $k \ge 0$ there are $C > 0$ and $\alpha \in (0,1)$
such that for all $u \in H^2(\Omega)$
\begin{align}\label{stability_intro}
\norm{u}_{H^1(B)} 
\le 
C (\norm{u}_{H^1(\omega)} + 
\norm{\Delta u + k^2 u}_{L^2(\Omega)})^\alpha \norm{u}_{H^1(\Omega)}^{1-\alpha}.
\end{align}
If $B \setminus \omega$ touches the boundary of $\Omega$, then one can only expect logarithmic stability, since it was shown in the classical paper \cite{John} that the optimal stability estimate for analytic continuation from a disk of radius strictly less than 1 to the concentric unit disk is of logarithmic type, and analytic functions are harmonic.

In general, the constants $C$ and $\alpha$ in \eqref{stability_intro} depend on $k$, as can be seen in \cref{wkb} given in \ref{appendix}. However, under suitable convexity assumptions on the geometry and direction of continuation it is possible to prove that in (\ref{stability_intro}) both the constants $C$ and $\alpha$ are independent of $k$, see the uniform estimate in \cref{cor_Holder} below, which is closely related to the so-called increased stability for unique continuation
\cite{Isakov}. Obtaining optimal error bounds in the finite element approximation crucially depends on deriving estimates similar to (\ref{stability_intro}), with weaker norms in the right-hand side, as in \cref{cor_Holder_impr} below, or in both sides, by shifting the Sobolev indices one degree down, as in \cref{shifted3b} below.

In addition to robustness with respect to $k$,
an advantage of using stabilized FEM for this unique continuation problem is that--when designed carefully--its implementation does not require information on the constants $C$ and $\alpha$ in (\ref{stability_intro}), or any other quantity from the continuous stability theory, such as a specific choice of a Carleman weight function.
Moreover, unlike other techniques such as Tikhonov regularization or quasi-reversibility, no auxiliary regularization parameters need to be introduced.
The only asymptotic parameter in our method is the size of the finite element mesh, and in particular, we do not need to saturate the finite element method with respect to an auxiliary parameter as, for example, in the estimate (34) in \cite{Bourgeois}.

Throughout the paper, $C$ will denote a positive constant independent of the wave number $k$ and the mesh size $h$, and which depends only on the geometry of the problem. By $A \lesssim B$ we denote the inequality $A \le C B$, where $C$ is as above.

For the well-posed problem of the Helmholtz equation with the Robin boundary condition
\begin{equation}\label{helmholtzbvp}
\Delta u + k^2 u = -f \quad \text{in } \Omega \quad \text{and} \quad \p_n u + \mathrm{i} ku = 0 \quad \text{on } \p\Omega, 
\end{equation}
the following sharp bounds
\begin{equation}\label{well-posed1}
\norm{\nabla u}_{L^2(\Omega)} + k\norm{u}_{L^2(\Omega)} \le C \norm{f}_{L^2(\Omega)}
\end{equation}
and
\begin{equation}\label{well-posed2}
\norm{u}_{H^2(\Omega)} \le C k \norm{f}_{L^2(\Omega)}
\end{equation}
hold for a star-shaped Lipschitz domain $\Omega$ and any wave number $k$ bounded away from zero \cite{Spence2016}. The error estimates that we derive in \cref{stabilizedfem}, e.g. $\norm{u-u_h}_{H^1(B)} \le C (h k)^\alpha \norm{u}_*$ in \cref{H1error}, contain the term 
\begin{equation}\label{starnorm}
\norm{u}_* = \norm{u}_{H^2(\Omega)} + k^2 \norm{u}_{L^2(\Omega)},
\end{equation}
which corresponds to the well-posed case term $k \norm{f}_{L^2(\Omega)}$.

It is well known from the seminal works \cite{BabuskaRev, IhlenburgBabuska1, IhlenburgBabuska2} that the finite element approximation of the Helmholtz problem is challenging also in the well-posed case due to the so-called pollution error. Indeed, to observe optimal convergence orders of $H^1$- and $L^2$-errors the mesh size $h$ must satisfy a smallness condition related to the wave number $k$, typically for piecewise affine elements, the condition $k^2 h \lesssim 1$.
This is due to the dispersion error that is most important for low order approximation spaces. The situation improves if higher order polynomial approximation is used. Recently, the precise conditions for optimal convergence when using $hp$-refinement ($p$ denotes the polynomial order of the approximation space) were shown in \cite{Melenk}.
Under the assumption that the solution operator for Helmholtz problems is polynomially bounded in $k$, it is shown that quasi-optimality is obtained under the conditions that $kh/p$ is sufficiently small and the polynomial degree $p$ is at least $O(\log k)$.

Another way to obtain absolute stability (i.e. stability without, or under mild, conditions on the mesh size) of the approximate scheme is to use stabilization. The continuous interior penalty stabilization (CIP) was introduced for the Helmholtz problem in \cite{Wu}, where stability was shown in the $k h \lesssim 1$ regime, and was subsequently used to obtain error bounds for standard piecewise affine elements when $k^3 h^2 \lesssim 1$. It was then shown in \cite{BurmanHelmholtz} that, in the one dimensional case, the CIP stabilization can also be used to eliminate the pollution error, provided the penalty parameter is appropriately chosen. When deriving error estimates for the stabilized FEM that we herein introduce, we shall make use of the mild condition $k h \lesssim 1$. To keep down the technical detail we restrict the analysis to the case of piecewise affine finite element spaces, but the extension of the proposed method to the high order case follows using the stabilization operators suggested in \cite{BurmanSIAM} (see also \cite{BurmanChapter} for a discussion of the analysis in the ill-posed case). 

From the point of view of applications, unique continuation problems often arise in control theory and inverse scattering problems. For instance, the above problem could arise when the acoustic wave field $u$ is measured on $\omega$
and there are unknown scatterers present outside $\Omega$.

\section{Continuum stability estimates}\label{continuum_estimates}

Our stabilized FEM will build on certain variations of the basic estimate (\ref{stability_intro}),
with the constants independent of the wave number,
and we derive these estimates in the present section. 
The proofs are based on a Carleman estimate that is a variation of \cite[Lemma 2.2]{Isakov}
but we give a self-contained proof for the convenience of the reader. 
In \cite{Isakov} the Carleman estimate was used to derive a so-called increased stability estimate under suitable convexity assumptions on the geometry. To be more precise, let $\Gamma \subset \partial \Omega$ be such that $\Gamma \subset \partial \omega$ and $\Gamma$ is at some positive distance away from $\partial \omega \cap \Omega$. For a compact subset $S$ of the open set $\Omega$, let $P(\nu; d)$ denote the half space which has distance $d$ from $S$ and $\nu$ as the exterior normal vector. Let $\Omega(\nu;d) = P(\nu; d) \cap \Omega$ and denote by $B$ the union of the sets $\Omega(\nu;d)$ over all $\nu$ for which $P(\nu; d) \cap \partial \Omega \subset \Gamma$. This geometric setting is exemplified by \cref{domain_conv} and it is illustrated in a general way in Figures 1 and 2 of \cite{Isakov} where $B$ is denoted by $\Omega(\Gamma;d)$. Under these assumptions it was proven that
\begin{align}
\label{increased_stability}
\norm{u}_{L^2(B)} 
\le 
C F + C k^{-1} F^\alpha \norm{u}_{H^1(\Omega)}^{1-\alpha},
\end{align}
where $F = \norm{u}_{H^1(\omega)} + 
\norm{\Delta u + k^2 u}_{L^2(\Omega)}$
and the constants $C$ and $\alpha$ are independent of $k$.
Here $F$ can be interpreted as the size of the data in the unique continuation problem and the $H^1$-norm of $u$
as an a priori bound. 
As $k$ grows, the first term on the right-hand side of (\ref{increased_stability}) dominates the second one, 
and the stability is increasing in this sense. 

As our focus is on designing a finite element method,
we prefer to measure the size of the data in the weaker norm 
$$
E = \norm{u}_{L^2(\omega)} + 
\norm{\Delta u + k^2 u}_{H^{-1}(\Omega)}.
$$
Taking $u$ to be a plane wave solution to (\ref{helholtz_intro}) suggests that 
\begin{align*}
\norm{u}_{L^2(B)} 
\le 
C k E + C E^\alpha \norm{u}_{L^2(\Omega)}^{1-\alpha},
\end{align*}
could be the right analogue of (\ref{increased_stability})
when both the data and the a priori bound are in weaker norms. 
We show below, see \cref{shifted3b}, 
a stronger estimate with only the second term on the right-hand side.

\cref{lem_carleman_eq} below captures the main step of the proof of our Carleman estimate. This is an elementary, but somewhat tedious, computation that establishes an identity similar to that in \cite{Liu} where the constant in a Carleman estimate for the wave equation was studied. 
For an overview of Carleman estimates see \cite{LeRousseau,TataruRev}, the classical references are \cite[Chapter 17]{HormanderVol3} for second order elliptic equations, and \cite[Chapter 28]{HormanderVol4} for hyperbolic and more general equations.
In the proofs, the idea is to use an exponential weight function $e^{\ell(x)}$ and study the expression
$$
\Delta (e^\ell w) = e^\ell \Delta w + \text{lower order terms},
$$
or the conjugated operator $e^{-\ell} \Delta e^\ell$.
A typical approach is to study commutator estimates for the real and imaginary part of the principal symbol of the conjugated operator,
see e.g. \cite{LeRousseau}. This can be seen as an alternative way to estimate the cross terms appearing in the proof of \cref{lem_carleman_eq}. 
Sometimes semiclassical analysis is used to derive the estimates, see e.g. \cite{LeRousseau}. This is very convenient when the estimates are shifted in the Sobolev scale, and we will use these techniques in \cref{sec_shifting} below.

\subsection{A Carleman estimate and conditional H\"older stability}

Denote by $(\cdot, \cdot)$, $|\cdot|$, $\div$, $\nabla$ and $D^2$ the inner product, norm,  divergence, gradient and Hessian with respect to the Euclidean structure in $\Omega \subset \R^{1+n}$.
(Below, \cref{lem_carleman_eq} and \cref{cor_ptwise_Carleman} are written so that 
they hold also when $\Omega$ is a Riemannian manifold
and the above concepts are replaced with their Riemannian analogues.)

\begin{lemma}
	\label{lem_carleman_eq}
	Let $k \ge 0$.
	Let $\ell, w \in C^2(\Omega)$ and $\sigma \in C^1(\Omega)$.
	We define $v = e^\ell w$, and 
	\begin{align*}
	a = \sigma - \Delta \ell, \quad 
	q = k^2 + a + |\nabla \ell|^2, \quad 
	b = -\sigma v - 2(\grad v, \grad \ell), \quad
	c = (|\grad v|^2 - q v^2) \grad \ell.
	\end{align*}
	Then
	\begin{align*}
	&e^{2 \ell} (\Delta w + k^2 w)^2/2 
	=
	(\Delta v + q v)^2/2 + b^2/2
	\\&\quad
	+ a |\nabla v|^2 + 2 D^2 \ell(\grad v, \grad v)
	+ \left(-a |\grad \ell|^2 + 2 D^2 \ell (\grad \ell, \grad \ell)\right)v^2
	- k^2 a v^2
	\\&\quad
	+ \div(b \nabla v + c) + R,
	\end{align*}
	where 
	$
	R = (\grad \sigma , \grad v)v + \left(\div (a \grad \ell) - a\sigma \right) v^2.
	$
\end{lemma}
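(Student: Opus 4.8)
The plan is to establish the identity pointwise; it is purely a matter of the product rule together with the recognition that all the ``cross terms'' are exact divergences. The starting point is the conjugated operator. Writing $w = e^{-\ell}v$ and differentiating twice gives
\begin{align*}
e^{\ell}\bigl(\Delta w + k^2 w\bigr)
= \Delta v - 2(\grad v,\grad\ell) + \bigl(|\grad\ell|^2 - \Delta\ell + k^2\bigr)v,
\end{align*}
and since $a = \sigma - \Delta\ell$ and $q = k^2 + a + |\grad\ell|^2$, the coefficient of $v$ is $q - \sigma$, so the right-hand side equals $(\Delta v + qv) + b$ with $b = -\sigma v - 2(\grad v,\grad\ell)$ as in the statement. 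Squaring and dividing by two,
\begin{align*}
e^{2\ell}\bigl(\Delta w + k^2 w\bigr)^2/2 = (\Delta v + qv)^2/2 + b^2/2 + b(\Delta v + qv),
\end{align*}
so the lemma reduces to showing that the single cross term $b(\Delta v + qv)$ equals the remaining terms on the right-hand side.

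I would then expand $b(\Delta v + qv) = b\,\Delta v + q v\, b$ and ``integrate by parts at the level of the integrand''. For the first piece, $b\,\Delta v = \div(b\grad v) - (\grad b,\grad v)$, and differentiating $b$ the only delicate term is $-2\bigl(\grad(\grad v,\grad\ell),\grad v\bigr) = -2(D^2 v\,\grad\ell,\grad v) - 2 D^2\ell(\grad v,\grad v)$; using $(D^2 v\,\grad\ell,\grad v) = \tfrac12(\grad|\grad v|^2,\grad\ell)$ one rewrites $2(D^2 v\,\grad\ell,\grad v) = \div(|\grad v|^2\grad\ell) - |\grad v|^2\Delta\ell$. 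Collecting and using $a = \sigma - \Delta\ell$ gives
\begin{align*}
b\,\Delta v = \div\bigl(b\grad v + |\grad v|^2\grad\ell\bigr) + a|\grad v|^2 + 2 D^2\ell(\grad v,\grad v) + (\grad\sigma,\grad v)v.
\end{align*}
For the second piece, $qvb = -q\sigma v^2 - 2 q v(\grad v,\grad\ell) = -q\sigma v^2 - q(\grad v^2,\grad\ell)$, and $q(\grad v^2,\grad\ell) = \div(qv^2\grad\ell) - v^2(\grad q,\grad\ell) - qv^2\Delta\ell$, which produces exactly the divergence $-\div(qv^2\grad\ell)$ needed to complete $c = (|\grad v|^2 - qv^2)\grad\ell$, leaving a coefficient of $v^2$.

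Finally I would collect the whole $v^2$ coefficient, namely $-q\sigma + (\grad q,\grad\ell) + q\Delta\ell$, and simplify it using $q = k^2 + a + |\grad\ell|^2$, the identity $\grad|\grad\ell|^2 = 2 D^2\ell\,\grad\ell$, and repeatedly $\Delta\ell - \sigma = -a$: the $k^2$-proportional part collapses to $-k^2 a$, the $|\grad\ell|^2$-proportional part to $-a|\grad\ell|^2$, the term $2 D^2\ell(\grad\ell,\grad\ell)$ appears directly, and the remaining $(\grad a,\grad\ell) + a\Delta\ell - a\sigma$ is precisely $\div(a\grad\ell) - a\sigma$. Together with the $(\grad\sigma,\grad v)v$ term from the first piece this reassembles $R$, and the identity is proved. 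The computation is elementary; the only real obstacle is the bookkeeping of the second-derivative cross terms and being disciplined about which contributions are pushed into $\div(b\grad v + c)$ (the $D^2 v$ ones) and which are kept as explicit quadratic forms (the $D^2\ell$ ones) — that choice is what makes the identity close.
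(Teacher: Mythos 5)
Your proposal is correct and follows essentially the same route as the paper's proof: conjugate to get $e^{\ell}(\Delta w + k^2 w) = \Delta v + qv + b$, square, and resolve the cross terms $b\,\Delta v$ and $bqv$ into divergences plus the stated quadratic forms via the identity $2D^2 v(\grad v, \grad \ell) = \div(|\grad v|^2 \grad \ell) - |\grad v|^2 \Delta \ell$ and the expansion of $\div(q\grad\ell) - q\sigma$. The only difference is cosmetic (the paper splits $b\Delta v$ according to the two summands of $b$, while you differentiate $b$ as a whole), and your final collected expressions for both cross terms agree with the paper's.
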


A proof of this result is given in \ref{appendix}. In the present paper we use \cref{lem_carleman_eq} only with the choice $\sigma =
\Delta \ell$, or equivalently $a=0$, but the more general version of the lemma is useful when non-convex geometries are considered. In fact, instead of using a strictly convex function $\phi$ as in \cref{cor_ptwise_Carleman} below, it is possible to use a function $\phi$ without critical points, and convexify by taking $\ell = \tau e^{\alpha \phi}$ and $\sigma =
\Delta \ell + \alpha \lambda \ell$ for suitable constants $\alpha$ and $\lambda$. In the present context this will lead to an estimate that is not robust with respect to $k$, but we will use such a technique in the forthcoming paper \cite{BNO2018}.

\begin{corollary}[Pointwise Carleman estimate]
	\label{cor_ptwise_Carleman}
	Let $\phi \in C^3(\Omega)$ be a strictly convex function without critical points, and choose $\rho > 0$ such that
	\begin{align*}
	D^2 \phi (X, X) \ge \rho |X|^2, \quad X \in T_x \Omega,\ x \in \Omega.
	\end{align*}
	Let $\tau > 0$ and $w \in C^2(\Omega)$.
	We define $\ell = \tau \phi$, $v = e^\ell w$, and 
	\begin{align*}
	b = -(\Delta \ell) v - 2(\grad v, \grad \ell), \quad
	c = (|\grad v|^2 - (k^2 + |\nabla \ell|^2) v^2) \grad \ell.
	\end{align*}
	Then
	\begin{align*}
	e^{2 \tau\phi}
	\left( (a_0 \tau - b_0) \tau^2 w^2 + 
	(a_1 \tau - b_1)|\nabla w|^2 \right)
	+ \div(b \nabla v + c)
	\le e^{2 \tau\phi} (\Delta w + k^2 w)^2/2,
	\end{align*}
	where the constants $a_j, b_j > 0$, $j=0,1$, 
	depend only on $\rho$, $\inf\limits_{x \in \Omega} |\nabla \phi(x)|^2$ and $\sup\limits_{x \in \Omega} |\grad (\Delta \phi(x))|^2$.
\end{corollary}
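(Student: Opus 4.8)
The plan is to specialize \cref{lem_carleman_eq} to the convex weight $\ell=\tau\phi$ with the choice $\sigma=\Delta\ell$, i.e.\ $a=0$; then the $b$ and $c$ appearing in the corollary are exactly those produced by the lemma, and the hypothesis $\phi\in C^3(\Omega)$ is precisely what guarantees $\sigma=\tau\Delta\phi\in C^1(\Omega)$, so the lemma applies. Writing $D^2\ell=\tau D^2\phi$, $\nabla\ell=\tau\nabla\phi$, $q=k^2+\tau^2|\nabla\phi|^2$ (positive since $\phi$ has no critical points) and $R=\tau(\nabla(\Delta\phi),\nabla v)v$, the identity of \cref{lem_carleman_eq} rearranges to
\[
e^{2\tau\phi}(\Delta w+k^2w)^2/2-\div(b\nabla v+c)=(\Delta v+qv)^2/2+b^2/2+2\tau D^2\phi(\nabla v,\nabla v)+2\tau^3 D^2\phi(\nabla\phi,\nabla\phi)v^2+R,
\]
so the task reduces to bounding the right-hand side below by a positive combination of $w^2$ and $|\nabla w|^2$ times $e^{2\tau\phi}$.

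First I would discard the two nonnegative squares $(\Delta v+qv)^2/2$ and $b^2/2$, and use strict convexity, $D^2\phi(X,X)\ge\rho|X|^2$, to get $2\tau D^2\phi(\nabla v,\nabla v)\ge2\tau\rho|\nabla v|^2$ and $2\tau^3D^2\phi(\nabla\phi,\nabla\phi)v^2\ge2\tau^3\rho|\nabla\phi|^2v^2$. The error term is controlled by Young's inequality, $|R|\le\tau|\nabla(\Delta\phi)|\,|\nabla v|\,|v|\le\tau\rho|\nabla v|^2+\tfrac{1}{4\rho}\tau|\nabla(\Delta\phi)|^2v^2$, which spends half of the available $|\nabla v|^2$ and leaves a $v^2$ term involving only $\sup_{\Omega}|\nabla(\Delta\phi)|^2$. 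At this stage one has
\[
e^{2\tau\phi}(\Delta w+k^2w)^2/2-\div(b\nabla v+c)\ge\tau\rho|\nabla v|^2+\Big(2\tau^3\rho|\nabla\phi|^2-\tfrac{1}{4\rho}\tau\sup_{\Omega}|\nabla(\Delta\phi)|^2\Big)v^2.
\]

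The last step is to pass back to $w$: since $v=e^{\tau\phi}w$ one has $v^2=e^{2\tau\phi}w^2$ and $|\nabla v|^2=e^{2\tau\phi}|\nabla w+\tau w\nabla\phi|^2\ge e^{2\tau\phi}\big(\tfrac12|\nabla w|^2-\tau^2|\nabla\phi|^2w^2\big)$, using $|A+B|^2\ge\tfrac12|A|^2-|B|^2$. This is the delicate point and the one I expect to be the main obstacle: the conversion of $|\nabla v|^2$ creates a negative contribution $-\tau\rho\cdot\tau^2|\nabla\phi|^2w^2=-\tau^3\rho|\nabla\phi|^2w^2$, which would destroy the positivity of the $w^2$-coefficient were it not dominated \emph{pointwise} by the $+2\tau^3\rho|\nabla\phi|^2w^2$ coming from the Hessian term $2\tau^3D^2\phi(\nabla\phi,\nabla\phi)v^2$; because the very same function $|\nabla\phi|^2$ occurs in both, this domination requires no upper bound on $|\nabla\phi|$, and only its (strictly positive) infimum enters after using $|\nabla\phi|^2\ge\inf_{\Omega}|\nabla\phi|^2$ in the leftover. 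Collecting terms yields
\[
e^{2\tau\phi}(\Delta w+k^2w)^2/2-\div(b\nabla v+c)\ge e^{2\tau\phi}\Big(\big(\tau^3\rho\inf_{\Omega}|\nabla\phi|^2-\tfrac{1}{4\rho}\tau\sup_{\Omega}|\nabla(\Delta\phi)|^2\big)w^2+\tfrac{\rho}{2}\tau|\nabla w|^2\Big).
\]
Since $\tau\le\tau^2$ for $\tau\ge1$ (the regime of interest; for small $\tau$ one simply keeps the displayed coefficients), the $w^2$-coefficient is bounded below by $(a_0\tau-b_0)\tau^2$ with $a_0=\rho\inf_{\Omega}|\nabla\phi|^2$ and $b_0=\tfrac{1}{4\rho}\sup_{\Omega}|\nabla(\Delta\phi)|^2$, and $\tfrac{\rho}{2}\tau|\nabla w|^2\ge(a_1\tau-b_1)|\nabla w|^2$ with $a_1=\rho/2$ and any fixed $b_1>0$; all four constants depend only on $\rho$, $\inf_{\Omega}|\nabla\phi|^2$ and $\sup_{\Omega}|\nabla(\Delta\phi)|^2$, which is the claim. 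Apart from this conversion, the proof is bookkeeping of Young's-inequality constants.
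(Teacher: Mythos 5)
Your proposal is correct and follows essentially the same route as the paper: specialize \cref{lem_carleman_eq} to $\sigma=\Delta\ell$ (so $a=0$), discard the two nonnegative squares, lower-bound the Hessian terms by convexity, absorb $R$ via Young's inequality, and convert $|\nabla v|^2$ back to $|\nabla w|^2$ while letting the cubic term $2\tau^3 D^2\phi(\nabla\phi,\nabla\phi)v^2$ dominate the resulting negative $\tau^2|\nabla\phi|^2 w^2$ contribution. The only differences are cosmetic choices of Young's-inequality constants (your $\tfrac12|A|^2-|B|^2$ versus the paper's $\tfrac13|A|^2-\tfrac12|B|^2$, and your $\tau$-weighted split of $R$ versus the paper's $C(|\nabla v|^2+\tau^2 v^2)$), which do not change the argument.
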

\begin{proof}
	We employ the equality in \cref{lem_carleman_eq} 
	with $\ell = \tau \phi$ and $\sigma = \Delta \ell$.
	With this choice of $\sigma$, it holds that $a = 0$.
	As the two first terms on the right-hand side of the equality are positive, it is enough to consider 
	\begin{align*}
	&2 D^2 \ell(\grad v, \grad v) + 2 D^2 \ell (\grad \ell, \grad \ell) v^2 + R 
	\\&\quad\ge 
	2 \rho \tau |\grad v|^2 + 2 \rho \tau^3 |\grad \phi|^2 v^2
	- \tau |\grad (\Delta \phi)| |\grad v| |v|. 
	\end{align*}
	The claim follows by combining this with
	\begin{align*}
	|\grad v|^2 = e^{2\tau \phi} |\tau w \grad \phi + \grad w|^2
	\ge e^{2\tau \phi} \frac13  |\grad w|^2 - e^{2\tau \phi} \frac12 |\grad \phi|^2 \tau^2 w^2,
	\end{align*}
	and
	\begin{align*}
	\tau |\grad (\Delta \phi)| |\grad v| |v|
	\le C (|\grad v|^2 + \tau^2 |v|^2).
	\end{align*}
\end{proof}

The above Carleman estimate implies an inequality that is similar to the three-ball inequality, see e.g. \cite{Alessandrini}. The main difference is that here the foliation along spheres is followed in the opposite direction, i.e. the convex direction.

When continuing the solution inside the convex hull of $\omega$ as in \cite{Isakov}, we consider for simplicity a specific geometric setting defined in \cref{cor_Holder} below and illustrated in \cref{geometry}. The stability estimates we prove below in \cref{cor_Holder} and \cref{cor_Holder_impr}, and \cref{shifted3b} also hold in other geometric settings in which $B$ is included in the convex hull of $\omega$ and $B \setminus \omega$ does not touch the boundary of $\Omega$, such as the one in \cref{domain_conv}. We prove this in \cref{ex_convex_square}.

\begin{figure}[h]
	\centering
	\resizebox{0.5\textwidth}{!}{%
		\begin{tikzpicture}
		
		\filldraw[color=gray] (-2,0) arc [radius=2, start angle=180, end angle=360]
		-- (3,0) arc [radius=3, start angle=0, end angle=-180]
		-- cycle;
		
		\draw[dashed] (-5,0) -- (5,0);
		
		\filldraw (0,0) circle (2pt) node[align=center, above] {$0$};
		
		\draw (0,0) circle [radius=2];
		\draw (0,0) circle [radius=3];
		
		\draw (2.1,0) node[align=right, above] {$r$};
		\draw (3.1,0) node[align=right, above] {$R$};
		\draw (-4.5,0) node[align=right, below] {$H$};
		
		\filldraw [gray] (0,2.5) circle (2pt) node[align=center, above] {$y$};
		\draw (0,2.5) circle [radius=3.4]; 
		
		\draw (0,-2.75) node[align=right, above] {$\omega$};
		\draw (0,-1.75) node[align=right, above] {$B$};
		
		\end{tikzpicture}
	}%
	\caption{The geometric setting in \cref{cor_Holder}.}
	\label{geometry}
\end{figure}
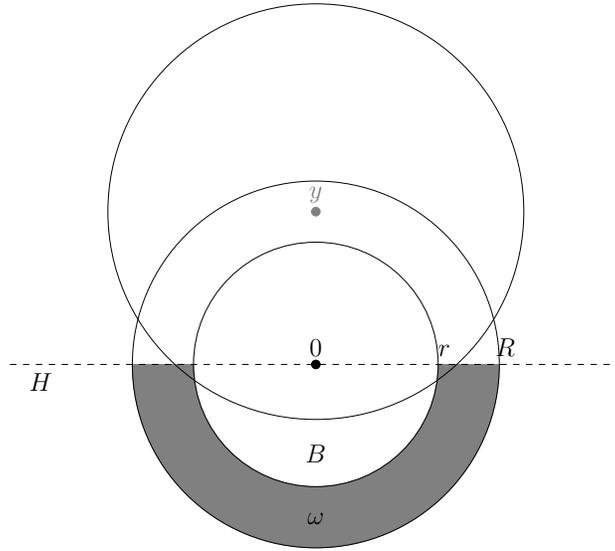

We use the following notation for a half space
\begin{align*}
H &= \{(x^0, \dots, x^n) \in \R^{1+n};\ x^0 < 0\}.
\end{align*}

\begin{corollary}
	\label{cor_Holder}
	Let $r > 0$, $\beta > 0$, $R > r$ and $\sqrt{r^2 + \beta^2} < \rho < \sqrt{R^2 + \beta^2}$. Define
	$y = (\beta, 0, \dots,0)$ and
	$$
	\Omega = H \cap B(0, R), 
	\quad
	\omega = \Omega \setminus \overline{B(0,r)}, 
	\quad
	B = \Omega \setminus \overline{B(y, \rho)}.
	$$
	Then there are $C > 0$ and $\alpha \in (0,1)$
	such that for all $u \in C^2(\Omega)$ and $k \ge 0$
	$$
	\norm{u}_{H^1(B)} 
	\le 
	C (\norm{u}_{H^1(\omega)} + 
	\norm{\Delta u + k^2 u}_{L^2(\Omega)})^\alpha \norm{u}_{H^1(\Omega)}^{1-\alpha}.
	$$
\end{corollary}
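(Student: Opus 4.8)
The plan is to derive the estimate from the pointwise Carleman estimate in Cref cor_ptwise_Carleman by a standard convexification-and-cutoff argument. First I would fix a strictly convex function $\phi$ without critical points adapted to the geometry: a natural choice is $\phi(x) = -|x - y|^2$ rescaled and shifted so that its level sets $\{\phi = \text{const}\}$ foliate $\Omega$ by spheres centered at $y$, sweeping from the outer sphere $\partial B(y,\rho)$ (bounding $B$) inward through $\omega$. Since $D^2\phi = -2\,\mathrm{Id}$ is uniformly negative, I would instead use $\phi(x) = |x-y|^2$ or, to get the foliation running in the convex direction as the text emphasizes, work with $e^{\lambda|x-y|^2}$ for large $\lambda$; the point is to arrange that the region $B\setminus\omega$ lies between two level sets $\{\phi \ge \mu_1\}$ and $\{\phi \le \mu_0\}$ with $\mu_0 < \mu_1$, that $\{\phi \ge \mu_0\} \cap \Omega$ is compactly contained in $\Omega \cup \omega$, and that $\omega$ contains a neighborhood of $\{\phi = \mu_0\}$. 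The constraint $\sqrt{r^2+\beta^2} < \rho < \sqrt{R^2+\beta^2}$ is exactly what guarantees such level sets exist with the right containments relative to $B(0,r)$, $B(0,R)$ and $B(y,\rho)$.

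Next I would introduce a smooth cutoff $\chi \in C_c^\infty$ equal to $1$ on the region where $\phi \ge \mu_1$ (which contains $B$) and supported in $\{\phi > \mu_0\}$, so that $\nabla\chi$ is supported in the transition zone $\{\mu_0 < \phi < \mu_1\}$, which by construction lies inside $\omega$. Apply Cref cor_ptwise_Carleman to $w = \chi u$ and integrate over $\Omega$. The divergence term integrates to zero (boundary terms vanish because $\chi u$ is compactly supported and $\partial\Omega$-contributions are killed by the cutoff), yielding
\begin{align*}
\tau^3 \int_\Omega e^{2\tau\phi} (\chi u)^2 + \tau \int_\Omega e^{2\tau\phi} |\nabla(\chi u)|^2
\lesssim \int_\Omega e^{2\tau\phi} (\Delta(\chi u) + k^2 \chi u)^2,
\end{align*}
valid for $\tau$ larger than some threshold $\tau_0$ depending only on the geometry (so that $a_j\tau - b_j > 0$). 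Expanding $\Delta(\chi u) + k^2\chi u = \chi(\Delta u + k^2 u) + 2\nabla\chi\cdot\nabla u + (\Delta\chi)u$, the commutator terms involve only derivatives of $\chi$, hence are supported in the transition zone inside $\omega$, and can be absorbed or bounded by $\|u\|_{H^1(\omega)}$ times $e^{2\tau \mu_1}$. On the left, restricting the integral to $B$ (where $\chi = 1$ and $\phi \ge \mu_1$) gives a lower bound $e^{2\tau\mu_1}\|u\|_{H^1(B)}^2$ up to constants.

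Putting these together produces an inequality of the form $e^{\tau\mu_1}\|u\|_{H^1(B)} \lesssim e^{\tau\mu_1}(\|u\|_{H^1(\omega)} + \|\Delta u + k^2 u\|_{L^2(\Omega)}) + e^{\tau\mu_2}\|u\|_{H^1(\Omega)}$ for some $\mu_2 > \mu_1$ (coming from $\sup_\Omega \phi$), after dividing through by $\tau$ and the lower-order factors. Writing $F = \|u\|_{H^1(\omega)} + \|\Delta u + k^2 u\|_{L^2(\Omega)}$ and $G = \|u\|_{H^1(\Omega)}$, this reads $\|u\|_{H^1(B)} \lesssim e^{\tau\delta}F + G$ for suitable $\delta = \mu_1 - \mu_0 > 0$ after relabeling, uniformly in $\tau \ge \tau_0$ and — crucially — uniformly in $k$, since in Cref cor_ptwise_Carleman the $k^2$ terms sit harmlessly on the left and the constants $a_j, b_j$ do not involve $k$. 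The final step is the standard optimization: if $F \le e^{-\tau_0\delta} G$ one chooses $\tau$ so that $e^{\tau\delta} = (G/F)$ (legitimate since then $\tau \ge \tau_0$), balancing the two terms to get $\|u\|_{H^1(B)} \lesssim F^\alpha G^{1-\alpha}$ with $\alpha \in (0,1)$ determined by the ratio of the exponential rates; if instead $F > e^{-\tau_0\delta}G$ the estimate is trivial since then $G \lesssim F^\alpha G^{1-\alpha}$. I expect the main obstacle to be the bookkeeping in the first paragraph: verifying that the three spheres in the hypothesis really do admit convex level sets of a single $\phi$ with all the required strict containments (so that $\nabla\chi$ is supported strictly inside $\omega$ and $\{\phi \ge \mu_0\}\cap\Omega \Subset \Omega\cup\omega$), and handling the fact that $\Omega$ has corners where $\partial B(0,R)$ meets $\partial H$ — though the cutoff removes any genuine difficulty there since $\chi$ vanishes near $\partial\Omega \setminus \Gamma$. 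A minor additional point is the density step to pass from $C^2(\Omega)$ (or $C_c^\infty$) to the stated class, and the reduction to real-valued $u$ by treating real and imaginary parts separately.
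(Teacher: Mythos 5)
Your overall strategy -- apply \cref{cor_ptwise_Carleman} with the weight $\phi(x)=|x-y|^2$ to a cutoff $\chi u$, integrate, estimate the commutator terms, and optimize over $\tau$ -- is the same as the paper's. But there is a genuine gap in the geometric heart of the argument. You require the transition zone of $\chi$ (the support of $\nabla\chi$) to lie entirely inside $\omega$, so that all commutator contributions can be charged to the data $F=\norm{u}_{H^1(\omega)}+\norm{\Delta u+k^2u}_{L^2(\Omega)}$. This is impossible here: the cutoff must vanish near all of $\p\Omega$, and the flat portion of $\p\Omega$ lying in $\overline{B(0,r)}$ (the part of the hyperplane $\{x^0=0\}$ within distance $r$ of the origin) is not adjacent to $\omega$, so $\nabla\chi$ is necessarily nonzero on a region with no data. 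The paper's resolution is to choose $s$ with $\sqrt{r^2+\beta^2}<s<\rho$, note that $\p\Omega\setminus B(y,s)\subset\overline\omega$, and take $\chi=1$ on $\Omega\setminus(B(y,s)\cup\omega)$: the dataless part of $\supp\nabla\chi$ then sits inside $B(y,s)$, where $\phi<s^2$ is \emph{strictly smaller} than the value $\rho^2$ attained on $B$. That contribution is bounded by $e^{2\tau s^2}\norm{u}_{H^1(\Omega)}^2$, which after division by the lower bound $e^{2\tau\rho^2}\norm{u}_{H^1(B)}^2$ produces the decaying factor $e^{-p\tau}$ with $p=\rho^2-s^2>0$. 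This step is not bookkeeping; it is where the H\"older exponent comes from, and it is exactly the role of the hypothesis $\sqrt{r^2+\beta^2}<\rho$.

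The omission propagates into your final inequality, which you write as $\norm{u}_{H^1(B)}\lesssim e^{\tau\delta}F+G$ with $G=\norm{u}_{H^1(\Omega)}$ carrying no $\tau$-dependent factor. An inequality of that form cannot be optimized to a H\"older bound: setting $e^{\tau\delta}=G/F$ as you propose makes both terms equal to $G$ and yields only the trivial estimate $\norm{u}_{H^1(B)}\lesssim G$. The standard interpolation lemma (the paper invokes \cite[Lemma 5.2]{LeRousseau}) requires the two-sided form $e^{q\tau}F+e^{-p\tau}G$ with $p>0$; here $q=(\beta+R)^2-\rho^2$ comes from $\sup_\Omega\phi$ on the data term and $p=\rho^2-s^2$ from the dataless commutator region described above. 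Your uniformity-in-$k$ observation (the $k^2$ terms are handled inside \cref{cor_ptwise_Carleman} and the constants $a_j,b_j$ are $k$-independent) is correct and is indeed the point of the corollary, but the proof as proposed does not close without the $e^{-p\tau}$ term.
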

\begin{proof}
	Choose $\sqrt{r^2 + \beta^2} < s < \rho$
	and observe that $\p \Omega \setminus B(y,s) \subset \overline \omega$.
	Define $\phi(x) = |x-y|^2$.
	Then $\phi$ is smooth and strictly convex in $\overline{\Omega}$,
	and it does not have critical points there.
	
	Choose $\chi \in C_0^\infty(\Omega)$ such that 
	$\chi = 1$ in $\Omega \setminus (B(y,s) \cup \omega)$ and set $w = \chi u$.
	\cref{cor_ptwise_Carleman} implies that for large $\tau > 0$
	\begin{align}
	\label{Holder_step1}
	\int_{\Omega} (\tau^3 w^2 + \tau |\nabla w|^2) e^{2 \tau \phi} dx
	\le C \int_{\Omega} (\Delta w + k^2 w )^2 e^{2 \tau \phi} dx,
	\end{align}
	a result also stated, without a detailed proof, in \cite[Exercise 3.4.6]{IsakovBook}. The commutator $[\Delta, \chi]$ vanishes outside $B(y,s) \cup \omega$
	and $\phi < s^2$ in $B(y,s)$. 
	Hence the right-hand side of (\ref{Holder_step1})
	is bounded by a constant times
	\begin{align}
	\label{cylinders_step2}
	&\int_{\Omega} |\Delta u + k^2 u|^2 e^{2 \tau \phi} dx 
	+ \int_{B(y,s) \cup \omega} |[\Delta, \chi]u|^2 e^{2 \tau \phi} dx 
	\\\notag&\quad\le 
	Ce^{2 \tau (\beta+R)^2} (\norm{\Delta u + k^2 u}_{L^2(\Omega)}^2 
	+ \norm{u}_{H^1(\omega)}^2)
	+ C e^{2\tau s^2} 
	\norm{u}_{H^1(B(y,s))}^2.
	\end{align}	
	The left-hand side of (\ref{Holder_step1})
	is bounded  from below by
	\begin{align}
	\label{cylinders_step3}
	&\int_{B} 
	\left( \tau |\nabla u|^2 
	+ \tau^3 |u|^2 \right) e^{2 \tau \phi}\, dx 
	\ge 
	e^{2 \tau \rho^2} \norm{u}_{H^1(B)}^2.
	\end{align}
	The inequalities (\ref{Holder_step1})-(\ref{cylinders_step3}) imply
	$$
	\norm{u}_{H^1(B)}
	\le 
	e^{q\tau} 
	\left(\norm{\Delta u + k^2 u}_{L^2(\Omega)} + 
	\norm{u}_{H^1(\omega)} \right)
	+ e^{-p \tau} \norm{u}_{H^1(\Omega)},
	$$
	where $q=(\beta + R)^2 - \rho^2$ and $p = \rho^2 - s^2 > 0$.
	The claim follows from \cite[Lemma 5.2]{LeRousseau}.
\end{proof}

\begin{corollary}\label{cor_Holder_impr}
	Let $\omega \subset B \subset \Omega$ be defined as in \cref{cor_Holder}. Then there are $C>0$ and $\alpha\in(0,1)$ such that
	$$
	\norm{u}_{H^1(B)} 
	\le 
	C k (\norm{u}_{L^2(\omega)} + 
	\norm{\Delta u + k^2 u}_{H^{-1}(\Omega)})^\alpha (\norm{u}_{L^2(\Omega)} +  \norm{\Delta u + k^2 u}_{H^{-1}(\Omega)})^{1-\alpha}.
	$$
\end{corollary}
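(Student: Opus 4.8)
The plan is to read \cref{cor_Holder_impr} as a ``smoothed'' version of \cref{cor_Holder}: one trades one Sobolev derivative on each side of the data/a~priori bound for a single power of $k$, the transfer between Sobolev scales being supplied by interior regularity for the Helmholtz operator. Concretely, I would first record a $k$-explicit Caccioppoli inequality: if $\Delta u + k^2 u = g$ in $\Omega$ and $\chi$ is a smooth cut-off, then testing the equation against $\chi^2 u$, absorbing the gradient cross-term and bounding the zeroth-order contribution $k^2\int\chi^2 u^2$ crudely, gives
\[
\norm{\chi\nabla u}_{L^2(\Omega)} \lesssim (1+k)\,\norm{u}_{L^2(\supp\chi)} + \norm{g}_{H^{-1}(\Omega)} ,
\]
so that for $V\Subset V'\Subset\Omega$ one has $\norm{u}_{H^1(V)}\lesssim(1+k)\norm{u}_{L^2(V')}+\norm{g}_{H^{-1}(\Omega)}$; the single factor $k$ here is the square root of the zeroth-order term, and this is the origin of the $k$ in the statement.

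With this in hand I would re-run the proof of \cref{cor_Holder}, but now estimating the quantities that there appear in $H^1$-norm — the commutator term $\int_\Omega|[\Delta,\chi]u|^2e^{2\tau\phi}$, which produced $\norm{u}_{H^1(\omega)}$, and the cut-off contribution over $B(y,s)\cap\Omega$, which produced the a~priori bound $\norm{u}_{H^1(\Omega)}$ — by the weaker quantity $(1+k)\norm{u}_{L^2(\cdot)}+\norm{g}_{H^{-1}(\Omega)}$ via the Caccioppoli inequality. The residual term $\int_\Omega|\chi g|^2e^{2\tau\phi}$, which naturally carries $\norm{g}_{L^2(\Omega)}$, I would handle by the standard device of subtracting a corrector: let $w$ solve the Dirichlet problem $\Delta w=g$ in $\Omega$, $w=0$ on $\partial\Omega$, so that $\norm{w}_{H^1(\Omega)}\lesssim\norm{g}_{H^{-1}(\Omega)}$ and $\Delta(u-w)+k^2(u-w)=-k^2w\in L^2(\Omega)$; applying the argument to $u-w$ then replaces $\norm{g}_{L^2(\Omega)}$ by $k^2\norm{g}_{H^{-1}(\Omega)}$ and only adds harmless terms $\lesssim\norm{g}_{H^{-1}(\Omega)}$ elsewhere. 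This yields an intermediate inequality
\[
\norm{u}_{H^1(B)} \le Ce^{q\tau}\,k\bigl(\norm{u}_{L^2(\omega)}+\norm{g}_{H^{-1}(\Omega)}\bigr)+Ce^{-p\tau}\bigl(\norm{u}_{L^2(\Omega)}+\norm{g}_{H^{-1}(\Omega)}\bigr),\qquad\tau\gg1,
\]
with $p,q>0$, and the asserted Hölder estimate follows by optimising over $\tau$ exactly as at the end of the proof of \cref{cor_Holder}, using \cite[Lemma 5.2]{LeRousseau}; any power of $k$ accumulated in excess of one is reabsorbed into a smaller choice of $\alpha$.

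The delicate point, which I expect to be the main obstacle, is that $\omega$ and $B$ both touch $\partial\Omega$, whereas the Caccioppoli step upgrades $L^2$-control to $H^1$-control only on subsets lying at a positive distance from $\partial\Omega$, since no boundary condition is imposed on $u$. One must therefore exploit the geometric structure inherited from \cref{cor_Holder} — that $B\setminus\omega$ is at positive distance from $\partial\Omega$ and that $\overline B\cap\partial\Omega\subset\overline\omega$ — to arrange that every application of the Caccioppoli inequality is either genuinely interior or takes place on $\omega$, where the data norm $\norm{u}_{L^2(\omega)}$ is simply retained. Choosing the cut-offs in the Carleman argument, the slightly shrunk sub-level set of $\phi$ used for the lower bound, and the corrector $w$ compatibly so that this bookkeeping closes is where the real care is needed; the Caccioppoli estimate itself and the $\tau$-optimisation are routine.
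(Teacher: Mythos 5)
Your overall strategy -- remove the $H^{-1}$ source with a corrector, convert the $H^1$ quantities on the data and a~priori sides into $k\,\norm{\cdot}_{L^2}+\norm{\cdot}_{H^{-1}}$ by a $k$-explicit interior estimate, then optimise in $\tau$ -- is the same as the paper's, and your direct Caccioppoli inequality is a perfectly good substitute for the paper's use of the Robin solvability bound in the cutoff step. But there are two concrete gaps. First, the choice of corrector loses a power of $k$. With $\Delta w = g$, $w\in H^1_0(\Omega)$, the difference $u-w$ still carries the source $-k^2w$, whose $L^2(\Omega)$ norm is only controlled by $k^2\norm{g}_{H^{-1}(\Omega)}$; feeding this into the Carleman right-hand side puts $k^2\norm{g}_{H^{-1}(\Omega)}$ into the data slot while the a~priori slot carries $k\norm{u}_{L^2(\Omega)}+\dots$, so optimisation yields a prefactor $k^{2\alpha}\cdot k^{1-\alpha}=k^{1+\alpha}$ rather than $k$. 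Your proposed repair -- shrinking $\alpha$ -- cannot fix this: writing the bound as $k\,(kA)^{\alpha}B^{1-\alpha}$ with $A=\norm{u}_{L^2(\omega)}+\norm{g}_{H^{-1}(\Omega)}$ and $B=\norm{u}_{L^2(\Omega)}+\norm{g}_{H^{-1}(\Omega)}$, the surplus $k^{\alpha}$ multiplies the whole product and is not absorbed by lowering the H\"older exponent, since one only has $A\le B$, not $kA\lesssim B$. The paper avoids this entirely by taking the corrector to solve the Helmholtz Robin problem $\mathcal{L}w=\mathcal{L}u$, $\p_n w+\mathrm{i}kw=0$, for which \cite[Corollary 1.10]{Spence2016} gives $\norm{w}_{H^1}\le Ck\norm{\mathcal{L}u}_{H^{-1}}$; then $v=u-w$ is an exact homogeneous solution, no residual source enters the Carleman estimate, and both slots carry exactly one factor of $k$. (Relatedly, your intermediate inequality is missing a factor $k$ on the $e^{-p\tau}$ term: the a~priori contribution $e^{2\tau s^2}\norm{u}^2_{H^1(B(y,s))}$ must also be downgraded to $L^2$ and picks up its own $k$.)

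Second, the boundary-touching difficulty you flag as ``the delicate point'' is not merely a bookkeeping issue in your set-up; it is fatal to the plan of re-running the Carleman proof with Caccioppoli inserted at the commutator term. The cutoff $\chi\in C_0^\infty(\Omega)$ in the proof of \cref{cor_Holder} necessarily has $\nabla\chi\neq 0$ in a full neighbourhood of $\p\Omega\cap\overline{\omega}$, so the commutator term demands control of $\norm{u}_{H^1}$ on a set reaching $\p\Omega$, where no interior Caccioppoli estimate applies and no boundary condition is available. The paper's resolution is structural rather than technical: it does not reopen the Carleman argument at all, but applies \cref{cor_Holder} as a black box on an inner triple $\omega_1\Subset\omega$, $B\subset\Omega_1$, $\Omega_1\Subset\Omega$ (to the homogeneous solution $v$), so that the only $H^1\!\to k\,L^2$ conversions needed are $\norm{u}_{H^1(\omega_1)}\lesssim k(\norm{u}_{L^2(\omega)}+\norm{\mathcal{L}u}_{H^{-1}(\omega)})$ and its analogue for $\Omega_1\subset\Omega$, both of which are genuinely interior (cutoffs in $C_0^\infty(\omega)$ and $C_0^\infty(\Omega)$ respectively). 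If you adopt that domain-shrinking structure together with the Robin corrector, your Caccioppoli lemma and the $\tau$-optimisation do the rest; as written, the proposal proves a weaker estimate with $k^{1+\alpha}$ and leaves the boundary issue open.
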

\begin{proof}
	Let $\omega_1 \subset \omega \subset B \subset \Omega_1 \subset \Omega$, denote for brevity by $\mathcal{L}$ the operator $\Delta + k^2$, and consider the following auxiliary problem
	\begin{align*}
	\mathcal{L}w &= \mathcal{L}u \quad \text{in } \Omega_1 \\
	\p_n w + \mathrm{i} kw &= 0 \quad \text{on } \p\Omega_1,
	\end{align*}
	whose solution satisfies the estimate \cite[Corollary 1.10]{Spence2016}
	$$
	\norm{\nabla w}_{L^2(\Omega_1)}+k\norm{w}_{L^2(\Omega_1)} \leq C k\norm{\mathcal{L}u}_{H^{-1}(\Omega_1)},
	$$
	which gives $$
	\norm{w}_{H^1(\Omega_1)} \leq C k \norm{\mathcal L u}_{H^{-1}(\Omega)}.
	$$
	For $v = u - w$ we have $\mathcal{L}v = 0$ in $\Omega_1$. The stability estimate in \cref{cor_Holder} used for $\omega_1,B,\Omega_1$ reads as
	$$
	\norm{v}_{H^1(B)}
	\le
	C \norm{v}^\alpha_{H^1(\omega_1)} \norm{v}_{H^1(\Omega_1)}^{1-\alpha},
	$$ 
	and the following estimates hold
	\begin{align*}
	\norm{u}_{H^1(B)}
	&\le
	\norm{v}_{H^1(B)} + \norm{w}_{H^1(B)}\\
	&\le
	C( \norm{u}_{H^1(\omega_1)} + \norm{w}_{H^1(\omega_1)} )^\alpha ( \norm{u}_{H^1(\Omega_1)} + \norm{w}_{H^1(\Omega_1)} )^{1-\alpha} + C k \norm{\mathcal{L}u}_{H^{-1}(\Omega)}
	\\
	&\le
	C( \norm{u}_{H^1(\omega_1)} + k \norm{\mathcal{L}u}_{H^{-1}(\Omega)} )^\alpha ( \norm{u}_{H^1(\Omega_1)} + k \norm{\mathcal{L}u}_{H^{-1}(\Omega)} )^{1-\alpha}.
	\end{align*}
	Now we choose a cutoff function $\chi \in C^{\infty}_0(\omega)$ such that $\chi = 1$ in $\omega_1$ and $\chi u$ satisfies
	$$
	\mathcal{L}(\chi u)=\chi \mathcal{L}u +[\mathcal{L},\chi]u,\quad \p_n (\chi u) + ik(\chi u) = 0 \text{ on } \p\omega.
	$$
	Since the commutator $[\mathcal{L},\chi]$ is of first order, using again \cite[Corollary 1.10]{Spence2016} we obtain
	
	\begin{align*}
	\norm{u}_{H^1(\omega_1)} &\le \norm{\chi u}_{H^1(\omega)}
	\le Ck \left( \norm{ [\mathcal{L},\chi]u }_{H^{-1}(\omega)} + \norm{ \chi \mathcal{L}u }_{H^{-1}(\omega)} \right) \\
	&\le Ck \left( \norm{u}_{L^2(\omega)} + \norm{\mathcal{L}u}_{H^{-1}(\omega)} \right)
	\end{align*}
	The same argument for $\Omega_1 \subset \Omega$ gives
	$$
	\norm{u}_{H^1(\Omega_1)} \le Ck( \norm{u}_{L^2(\Omega)} + \norm{\mathcal{L}u}_{H^{-1}(\Omega)}), 
	$$
	thus leading to the conclusion. 
\end{proof}

\subsection{Shifted three-ball inequality}
\label{sec_shifting}

\def\h{\hbar}
\def\scl{\text{scl}}

In this section we prove
an estimate as in \cref{cor_Holder},
but with the Sobolev indices shifted down one degree,
and our starting point is again the Carleman estimate 
in \cref{cor_ptwise_Carleman}.
When shifting Carleman estimates, as we want to keep track of the large parameter $\tau$, it is convenient to use the semiclassical version of pseudodifferential calculus. 
We write $\h > 0$ for the semiclassical parameter
that satisfies $\h = 1/\tau$.

The semiclassical (pseudo)differential operators are (pseudo)differential operators where, roughly speaking, each derivative is multiplied
by $\h$, for the precise definition see Section 4.1 of \cite{Zworski}.
The scale of semiclassical Bessel potentials is defined by 
$$
J^s = (1-\h^2 \Delta)^{s/2}, \quad s \in \R,
$$
and the semiclassical Sobolev spaces by
$$
\norm{u}_{H_\scl^s(\R^n)} = \norm{J^s u}_{L^2(\R^n)}.
$$
Then a semiclassical differential operator of order $m$
is continuous from $H_\scl^{m+s}(\R^n)$ to $H_\scl^s(\R^n)$,
see e.g. Section 8.3 of \cite{Zworski}.

We will give a shifting argument that is similar to that in Section 4 of \cite{DKSU}.
To this end, we need the following pseudolocal and commutator estimates for semiclassical pseudodifferential operators,
see e.g. (4.8) and (4.9) of \cite{DKSU}.
Suppose that $\psi,\chi \in C_0^\infty(\R^n)$
and that $\chi = 1$ near $\supp(\psi)$,
and let $A, B$ be two semiclassical pseudodifferential 
operators of orders $s, m$, respectively.
Then for all $p,q,N \in \R$, there is $C>0$
\begin{align}
\norm{(1-\chi) A (\psi u)}_{H_\scl^{p}(\R^n)}
&\le C \h^N \norm{u}_{H_\scl^{q}(\R^n)}, \label{pseudolocality}
\\
\norm{[A,B] u}_{H_\scl^{p}(\R^n)} 
&\le C \h \norm{u}_{H_\scl^{p+s+m-1}(\R^n)}. \label{commutator}
\end{align}
Both these estimates follow from the composition calculus, see e.g. \cite[Theorem 4.12]{Zworski}.

Let $\phi$ be as in \cref{cor_ptwise_Carleman}
and set $\ell = \phi / \h$ and $\sigma = \Delta \ell$ in \cref{lem_carleman_eq}. Then 
\begin{align*}
(e^{\phi / \h} \Delta e^{-\phi / \h} v + k^2 v)^2 / 2
&\ge 
2 \h^{-1} D^2 \phi(\nabla v, \nabla v) 
+ 2 \h^{-3} D^2 \phi(\nabla \phi, \nabla \phi) v^2
\\&\quad
+ \div(b \nabla v + B) + \h^{-1} (\nabla \Delta \phi, \nabla v) v
\end{align*}
Write $P = e^{\phi / \h} \h^2 \Delta e^{-\phi / \h}$
and let $v \in C_0^\infty(\Omega')$ where $\Omega' \subset \R^n$ is open and bounded, and $\overline \Omega \subset \Omega'$. Then, rescaling by $\h^4$,
\begin{align*}
C \norm{P v + \h^2 k^2 v}_{L^2(\R^n)}^2
\ge \h \norm{\h \nabla v}_{L^2(\R^n)}^2 + \h \norm{v}_{L^2(\R^n)}^2 - C \h^2 \norm{v}_{H_\scl^1(\R^n)}^2,
\end{align*}
and for small enough $\h > 0$ we obtain
\begin{align*}
\sqrt \h \norm{v}_{H_\scl^1(\R^n)} \le C \norm{P v + \h^2 k^2 v}_{L^2(\R^n)}.
\end{align*} 
Now the conjugated operator $P$ is a semiclassical differential operator,
\begin{align*}
Pu = e^{\phi / \h} \h^2 \div \grad (e^{-\phi / \h} u)
= \h^2 \Delta u - 2 (\nabla \phi, \h \nabla u)
- \h (\Delta \phi) u + |\nabla \phi|^2 u. 
\end{align*}

Let $\chi, \psi \in C_0^\infty(\Omega')$ and suppose that $\psi=1$ near $\Omega$ and $\chi = 1$ near $\supp(\psi)$. Then for $v \in C_0^\infty(\Omega)$,
$$
\norm{v}_{H_\scl^{1+s}(\R^n)}
\le \norm{\chi J^s v}_{H_\scl^{1}(\R^n)} 
+ \norm{(1-\chi) J^s \psi v}_{H_\scl^{1}(\R^n)}
\le C \norm{\chi J^s v}_{H_\scl^{1}(\R^n)} 
$$
where we used the pseudolocality (\ref{pseudolocality}) to absorb the second term 
on the right-hand side by the left-hand side.  
We have 
\begin{align}\label{shift_step1}
\sqrt \h \norm{v}_{H_\scl^{1+s}(\R^n)}\le C\sqrt \h \norm{\chi J^s v}_{H_\scl^1(\R^n)} 
\le C \norm{(P + \h^2 k^2) \chi J^s v}_{L^2(\R^n)},
\end{align}
and using the commutator estimate (\ref{commutator}), we have
\begin{align*}
\norm{[P,\chi J^s] v}_{L^2(\R^n)}
\le C \h \norm{v}_{H^{1+s}_\scl(\R^n)}.
\end{align*}
This can be absorbed by the left-hand side of (\ref{shift_step1}). Thus
\begin{align*}
\sqrt \h \norm{v}_{H_\scl^{1+s}(\R^n)}\le C 
\norm{\chi J^s (P + \h^2 k^2) v}_{L^2(\R^n)}
\le C \norm{(P + \h^2 k^2) v}_{H_\scl^{s}(\R^n)}.
\end{align*}

Take now $s=-1$ and let the cutoff $\chi$ and the weight $\phi$ be as in the proof of \cref{cor_Holder}, with the additional condition on $\chi$ such that there is 
$\psi \in C_0^\infty(B(y,s) \cup \omega)$
satisfying $\psi = 1$ in $\supp([P,\chi])$.

Let $u \in C^\infty(\R^n)$ and set $w = e^{\phi/\h} u$. Then the previous estimate becomes
\begin{align*}
\sqrt \h \norm{\chi w}_{L^2(\R^n)}
\le C \norm{(P + \h^2 k^2) \chi w}_{H_\scl^{-1}(\R^n)}.
\end{align*}
We have 
$$
\norm{[P,\chi] w}_{H_\scl^{-1}(\R^n)}
=
\norm{[P,\chi] \psi w}_{H_\scl^{-1}(\R^n)}
\le C \h \norm{\psi w}_{L^2 (\R^n)}.
$$
Using the norm inequality $\norm{\cdot}_{H_\scl^{-1}(\R^n)} \le C \h^{-2} \norm{\cdot}_{H^{-1}(\R^n)}$, we thus obtain
\begin{align*}
\sqrt \h \norm{\chi e^{\phi/\h} u}_{L^2(\R^n)}
&\le 
C \norm{\chi(e^{\phi / \h} \Delta e^{-\phi / \h} + k^2) w}_{H_\scl^{-1}(\R^n)} + C \h \norm{\psi w}_{L^2 (\R^n)}
\\&\le
C \h^{-2} \norm{\chi e^{\phi/\h} (\Delta u + k^2 u)}_{H^{-1}(\R^n)} + C \h \norm{\psi e^{\phi/\h} u}_{L^2 (\R^n)}
\end{align*}
Using the same notation as in the proof of \cref{cor_Holder}, due to the choice of $\psi$ we get
$$
e^{\rho^2/\h} \norm{u}_{L^2(B)}
\le 
C e^{(\beta+R)^2/\h} 
\left( \h^{-\frac72} \norm{\Delta u + k^2 u}_{H^{-1}(\Omega)} + \h^{\frac12} \norm{u}_{L^2(\omega)} \right)
+ C e^{s^2/\h} \h^{\frac12} \norm{u}_{L^2(\Omega)},
$$
for small enough $\h > 0$. Absorbing the negative power of $\h$ in the exponential, and using \cite[Lemma 5.2]{LeRousseau}, we conclude the proof of the following result.

\begin{lemma}\label{shifted3b}
	Let $\omega \subset B \subset \Omega$ be defined as in \cref{cor_Holder}. Then there are $C>0$ and $\alpha\in(0,1)$ such that
	\begin{equation*}
	\norm{u}_{L^2(B)} 
	\le 
	C (\norm{u}_{L^2(\omega)} + 
	\norm{\Delta u + k^2 u}_{H^{-1}(\Omega)})^\alpha \norm{u}_{L^2(\Omega)}^{1-\alpha}.
	\end{equation*}
\end{lemma}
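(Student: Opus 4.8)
The plan is to follow the same semiclassical shifting argument that was just carried out, but to start from the \emph{already shifted} Carleman-type bound rather than from the unshifted one, so the key observation is that the preparatory computation above already does essentially all the work. Concretely, I would take the inequality
$$
\sqrt \h \norm{\chi e^{\phi/\h} u}_{L^2(\R^n)}
\le
C \h^{-2} \norm{\chi e^{\phi/\h} (\Delta u + k^2 u)}_{H^{-1}(\R^n)} + C \h \norm{\psi e^{\phi/\h} u}_{L^2 (\R^n)},
$$
valid for $u \in C^\infty(\R^n)$ and small $\h>0$, with $\phi(x)=|x-y|^2$, and with $\chi,\psi$ chosen exactly as in the proof of \cref{cor_Holder}: $\chi \in C_0^\infty(\Omega)$ equals $1$ on $\Omega \setminus (B(y,s)\cup\omega)$, and $\psi \in C_0^\infty(B(y,s)\cup\omega)$ equals $1$ on $\supp([P,\chi]) \subset B(y,s)\cup\omega$. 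The crucial geometric facts, again identical to the proof of \cref{cor_Holder}, are: $\phi \ge \rho^2$ on $B$, so $\chi=1$ there and $e^{\phi/\h}\ge e^{\rho^2/\h}$; $\phi \le (\beta+R)^2$ on all of $\Omega$, bounding the first right-hand term by $e^{(\beta+R)^2/\h}$ times $\norm{\Delta u + k^2 u}_{H^{-1}(\Omega)}$; and $\phi < s^2$ on $B(y,s)$, while on $\omega$ we only need the crude bound $\phi \le (\beta+R)^2$ paired with the data term $\norm{u}_{L^2(\omega)}$, and on $B(y,s)$ the bound $e^{\phi/\h} \le e^{s^2/\h}$ paired with the a priori term $\norm{u}_{L^2(\Omega)}$.

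Assembling these estimates, and absorbing the factors $\h^{-7/2}$ and $\h^{1/2}$ into the exponentials (at the cost of shrinking $s$ infinitesimally, or simply noting $\h^{-N}e^{c/\h} \le C_\epsilon e^{(c+\epsilon)/\h}$), yields
$$
\norm{u}_{L^2(B)}
\le
C e^{q/\h}\left( \norm{\Delta u + k^2 u}_{H^{-1}(\Omega)} + \norm{u}_{L^2(\omega)} \right)
+ C e^{-p/\h} \norm{u}_{L^2(\Omega)},
$$
with $q = (\beta+R)^2 - \rho^2$ and $p = \rho^2 - s^2 > 0$, for all small $\h > 0$. This is precisely the hypothesis of \cite[Lemma 5.2]{LeRousseau}: an estimate of the form $A \le C(e^{q/\h} F + e^{-p/\h} G)$ holding for all $\h$ in an interval $(0,\h_0)$ implies $A \le C F^\alpha G^{1-\alpha}$ with $\alpha = p/(p+q) \in (0,1)$. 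Applying it with $A = \norm{u}_{L^2(B)}$, $F = \norm{u}_{L^2(\omega)} + \norm{\Delta u + k^2 u}_{H^{-1}(\Omega)}$ and $G = \norm{u}_{L^2(\Omega)}$ gives the claim for $u \in C^\infty$, and a density argument extends it to the relevant Sobolev class.

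I expect the main obstacle to be purely bookkeeping rather than conceptual: tracking the powers of $\h$ correctly through the chain of inequalities (the $\h^{-2}$ from $\norm{\cdot}_{H_\scl^{-1}} \le C\h^{-2}\norm{\cdot}_{H^{-1}}$, the $\sqrt\h$ on the left, and the $\h$ from the commutator estimate \eqref{commutator}) so that the net negative power $\h^{-7/2}$ in front of the data term is genuinely harmless — i.e. verifying that for any $c>0$ and $N\in\R$ one has $\h^{N} e^{c/\h} \le C e^{(c+\epsilon)/\h}$ on $(0,\h_0)$, and that the $\epsilon$-loss can be arranged to keep $q < (\beta+R)^2$ and $p>0$ by choosing $s$ close enough to $\rho$. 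A secondary point requiring a line of care is the commutator term: one must check that $[P,\chi]$ is supported where $\psi = 1$ so that $[P,\chi]w = [P,\chi]\psi w$, and that this term, after the $e^{\phi/\h}$ weight and the crude bound $\phi \le s^2$ on that support, contributes only to the $e^{s^2/\h}$ (a priori) part — but all of this has already been spelled out in the displayed computation preceding the statement, so the proof of \cref{shifted3b} is essentially a one-paragraph invocation of \cite[Lemma 5.2]{LeRousseau}.
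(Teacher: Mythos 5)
Your proposal is correct and follows exactly the paper's own route: the displayed semiclassical computation preceding the lemma is the proof, and one concludes by weighting with the geometric bounds $\phi\ge\rho^2$ on $B$, $\phi\le(\beta+R)^2$ on $\Omega$, $\phi<s^2$ on $B(y,s)$, absorbing the powers of $\h$ into the exponentials, and invoking \cite[Lemma 5.2]{LeRousseau}. The points you flag as needing care (the support condition $[P,\chi]\psi=[P,\chi]$ and the harmlessness of $\h^{-7/2}$) are precisely the ones the paper handles in the same way.
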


\section{Stabilized finite element method}\label{stabilizedfem}
We aim to solve the unique continuation problem for the Helmholtz equation
\begin{equation}\label{helmholtz}
\Delta u + k^2u = -f \text{ in } \Omega, \quad u=q|_{\omega},
\end{equation}
where $\omega \subset \Omega \subset \R^{1+n}$ are open, $f\in H^{-1}(\Omega)$ and $q\in L^2(\omega)$ are given.
Following the optimization based approach in \cite{BurmanSIAM, BHL2018} we will make use of the continuum stability estimates in \cref{continuum_estimates} when deriving error estimates for the finite element approximation.
\subsection{Discretization}
Consider a family $\mathcal{T}=\{\mathcal{T}_h\}_{h>0}$ of triangulations of $\Omega$ consisting of simplices such that the intersection of any two distinct ones is either a common vertex, a common edge or a common face. Also, assume that the family $\mathcal{T}$ is quasi-uniform. 
Let
$$V_h = \{u\in C(\bar{\Omega}): u|_K \in \P_1(K), K\in \mathcal{T}_h\}$$
be the $H^1$-conformal approximation space based on the $\P_1$ finite element and let
$$
W_h = V_h \cap H^1_0(\Omega).
$$
Consider the orthogonal $L^2$-projection $\Pi_h:L^2(\Omega) \to V_h$, which  satisfies
\begin{align*}
(u-\Pi_h u,v)_{L^2(\Omega)} &= 0,\quad u\in L^2(\Omega),\, v\in V_h, \\
\norm{\Pi_h u}_{L^2(\Omega)} &\le \norm{u}_{L^2(\Omega)},\quad u\in L^2(\Omega),
\end{align*}
and the Scott-Zhang interpolator $\pi_h:H^1(\Omega) \to V_h$, that preserves vanishing Dirichlet boundary conditions. Both operators have the following stability and approximation properties, see e.g. \cite[Chapter 1]{ErnBook},
\begin{align}
\norm{i_h u}_{H^1(\Omega)} &\le C \norm{u}_{H^1(\Omega)}, &u\in H^1(\Omega), \label{H1stab} \\
\norm{u-i_h u}_{H^m(\Omega)} &\le C h^{k-m} \norm{u}_{H^k(\Omega)}, &u\in H^k(\Omega) \label{interp},
\end{align}
where $i= \pi,\Pi,\, k=1,2$ and $m=0,k-1$.

The regularization on the discrete level will be based on the $L^2$-control of the gradient jumps over elements edges using the jump stabilizer
\begin{equation*}
\J (u,u) = \sum_{F\in\mathcal{F}_h} \int_F h \llbracket n \cdot \nabla u \rrbracket ^2 ds,\quad u\in V_h,
\end{equation*}
where $\mathcal{F}_h$ is the set of all internal faces, and the jump over $F\in\mathcal{F}_h$ is given by
$$
\llbracket n \cdot \nabla{u} \rrbracket_F = n_1 \cdot \nabla{u}|_{K_1} + n_2 \cdot \nabla{u}|_{K_2}, 
$$
with $K_1,K_2\in \mathcal{T}_h$ being two simplices such that $K_1 \cap K_2 = F$, and $n_j$ the outward normal of $K_j,\, j=1,2$. The face subscript is omitted when there is no ambiguity.

\begin{lemma}
	There is $C>0$ such that all $u\in V_h,\, v\in H^1_0(\Omega),\, w\in H^2(\Omega)$ and $h>0$ satisfy
	\begin{align}
	(\nabla u,\nabla v)_{L^2(\Omega)} &\le C \J(u,u)^{1/2} (h^{-1}\norm{v}_{L^2(\Omega)} + \norm{v}_{H^1(\Omega)}),\label{jumpineq1} \\
	\J(i_h w,i_h w) &\le C h^2 \norm{w}^2_{H^2(\Omega)},\quad i\in \{\pi,\Pi\}.\label{jumpineq2}
	\end{align}
	\begin{proof}
		See \cite[Lemma 2]{BurmanOksanen} when the interpolator is $\pi_h$. Since this proof uses just the approximation properties of $\pi_h$, it holds verbatim for $\Pi_h$.
	\end{proof}
\end{lemma}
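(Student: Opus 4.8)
The plan is to prove the two inequalities separately, using only element-wise integration by parts, scaled trace inequalities on simplices, and the interpolation bounds \eqref{interp}; no further machinery is needed, and all constants will be uniform in $h$ because the family $\mathcal{T}$ is quasi-uniform. This is essentially the argument of \cite[Lemma 2]{BurmanOksanen}, and since it only invokes the approximation properties \eqref{interp}, it will apply verbatim to both $\pi_h$ and $\Pi_h$.

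For \eqref{jumpineq1} I would start from the observation that $u \in V_h$ is affine on each $K \in \mathcal{T}_h$, so $\Delta u \equiv 0$ on $K$. Integrating by parts element by element and discarding the boundary term on $\partial\Omega$, which vanishes since $v \in H^1_0(\Omega)$, gives
$$
(\nabla u, \nabla v)_{L^2(\Omega)}
= \sum_{K \in \mathcal{T}_h} \int_{\partial K} (n_K \cdot \nabla u)\, v \, ds
= \sum_{F \in \mathcal{F}_h} \int_F \llbracket n \cdot \nabla u \rrbracket\, v \, ds,
$$
since each interior face is visited twice with opposite element normals. Applying the Cauchy--Schwarz inequality on each face and then summing over faces yields
$$
(\nabla u, \nabla v)_{L^2(\Omega)}
\le \J(u,u)^{1/2} \Big( \sum_{F \in \mathcal{F}_h} h^{-1} \norm{v}_{L^2(F)}^2 \Big)^{1/2},
$$
and I would control the remaining sum with the scaled trace inequality $\norm{v}_{L^2(F)}^2 \le C( h^{-1} \norm{v}_{L^2(K)}^2 + h \norm{\nabla v}_{L^2(K)}^2 )$ for a simplex $K$ with $F \subset \partial K$; summing over the uniformly bounded number of faces of each element and using $\sqrt{a^2 + b^2} \le a + b$ then produces $C \J(u,u)^{1/2}( h^{-1}\norm{v}_{L^2(\Omega)} + \norm{v}_{H^1(\Omega)} )$.

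For \eqref{jumpineq2}, since $w \in H^2(\Omega)$ the normal derivative $n \cdot \nabla w$ has no jump across interior faces, so $\llbracket n \cdot \nabla i_h w \rrbracket = \llbracket n \cdot \nabla(i_h w - w) \rrbracket$ on each $F \in \mathcal{F}_h$. I would bound this jump on $F = K_1 \cap K_2$ by the two one-sided traces of $\nabla(w - i_h w)$, apply the scaled trace inequality on each $K_j$, and use that $D^2 i_h w = 0$ on simplices, obtaining after summing over faces
$$
\J(i_h w, i_h w)
\le C \sum_{K \in \mathcal{T}_h} \big( \norm{\nabla(w - i_h w)}_{L^2(K)}^2 + h^2 \norm{D^2 w}_{L^2(K)}^2 \big)
\le C \big( \norm{w - i_h w}_{H^1(\Omega)}^2 + h^2 \norm{w}_{H^2(\Omega)}^2 \big).
$$
The interpolation estimate \eqref{interp} with $k = 2$, $m = 1$ then bounds $\norm{w - i_h w}_{H^1(\Omega)} \le C h \norm{w}_{H^2(\Omega)}$ for $i \in \{\pi, \Pi\}$, which closes the argument. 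The computations are routine; the only points that need care are keeping the powers of $h$ consistent through the scaled trace inequalities and noting that affine functions have vanishing Hessian on each simplex, so that \eqref{interp} applies directly. I do not expect a genuine obstacle --- as the cited reference indicates, $\Pi_h$ is handled exactly like $\pi_h$ because only the approximation properties are used.
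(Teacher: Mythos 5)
Your argument is correct and is precisely the element-wise integration by parts plus scaled trace inequality argument of \cite[Lemma 2]{BurmanOksanen} that the paper simply cites, including the observation that only the approximation properties \eqref{interp} (valid for both $\pi_h$ and $\Pi_h$ on a quasi-uniform mesh) are needed. No gaps; the key points --- $\Delta u=0$ and $D^2 i_h w=0$ on each simplex, vanishing of $\llbracket n\cdot\nabla w\rrbracket$ for $w\in H^2(\Omega)$, and the $h$-scalings in the trace inequalities --- are all handled correctly.
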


Adopting the notation
$$
a(u,z)=(\nabla u,\nabla z)_{L^2(\Omega)},\quad G_f(u,z)=a(u,z)-k^2(u,z)_{L^2(\Omega)} - \pair{f,z},\quad G=G_0,
$$
we write for $u\in H^1(\Omega)$ the weak formulation of $\Delta u + k^2u = -f$ as
$$
G_f(u,z)=0, \quad z\in H^1_0(\Omega).
$$

Our approach is to find the saddle points of the Lagrangian functional
\begin{equation*}
L_{q,f}(u,z) = \frac{1}{2} \norm{u-q}^2_{\omega} + \frac{1}{2}s(u,u)-\frac{1}{2}s^*(z,z)+G_f(u,z),
\end{equation*}
where $\norm{\cdot}_\omega$ denotes $\norm{\cdot}_{L^2(\omega)}$, and $s$ and $s^*$ are stabilizing (regularizing) terms for the primal and dual variables that should be consistent and vanish at optimal rates. The stabilization must control certain residual quantities representing the data of the error equation. The primal stabilizer will be based on the continuous interior penalty given by $\J$. It must take into account the zeroth order term of the Helmholtz operator. The dual variable can be stabilized in the $H^1$-seminorm. Notice that when the PDE-constraint is satisfied, $z=0$ is the solution for the dual variable of the saddle point, thus the stabilizer $s^*$ is consistent. Hence we make the following choice
$$
s(u,u) = \J(u,u) + \norm{hk^2 u}^2_{L^2(\Omega)},\quad s^* = a.
$$ For a detailed presentation of such discrete stabilizing operators we refer the reader to \cite{BurmanSIAM} or \cite{BurmanChapter}.
We define on $V_h$ and $W_h$, respectively, the norms
$$
\norm{u}_V = s(u,u)^{1/2}, \quad u\in V_h, \quad \norm{z}_W = s^*(z,z)^{1/2},\quad z\in W_h,
$$
together with the norm on $V_h \times W_h$ defined by
$$
\tnorm{(u,z)}^2 = \norm{u}^2_V + \norm{u}^2_{\omega} + \norm{z}^2_W.
$$
The saddle points $(u,z)\in V_h \times W_h$ of the Lagrangian $L_{q,f}$ satisfy
\begin{equation}\label{weakform}
A[(u,z),(v,w)] = (q,v)_{\omega} + \pair{f,w},\quad (v,w)\in V_h \times W_h,
\end{equation}
where $A$ is the symmetric bilinear form
\begin{equation*}
A[(u,z),(v,w)] = (u,v)_{\omega} + s(u,v) + G(v,z) - s^*(z,w) + G(u,w).
\end{equation*}
Since $A[(u,z),(u,-z)] = \norm{u}^2_{\omega} + \norm{u}^2_V + \norm{z}^2_W$ we have the following inf-sup condition
\begin{equation}\label{inf_sup}
\sup_{(v,w)\in V_h \times W_h} \frac{A[(u,z),(v,w)]}{\tnorm{(v,w)}} \ge \tnorm{(u,z)}
\end{equation}
that guarantees a unique solution in $V_h \times W_h$ for (\ref{weakform}).

\subsection{Error estimates}

We start by deriving some lower and upper bounds for the norm $\norm{\cdot}_V$. For $u_h \in V_h,\, z\in H^1_0(\Omega)$, we use (\ref{jumpineq1}) to bound
\begin{align*}
G(u_h,z) &= (\nabla u_h,\nabla z)_{L^2(\Omega)} - k^2(u_h,z)_{L^2(\Omega)} \\
&\le C \J(u_h,u_h)^{1/2} (h^{-1}\norm{z}_{L^2(\Omega)} + \norm{z}_{H^1(\Omega)}) + k^2 \norm{u_h}_{L^2(\Omega)} \norm{z}_{L^2(\Omega)},
\end{align*}
and hence
\begin{equation}\label{lowerv}
G(u_h,z) \le C \norm{u_h}_V (h^{-1}\norm{z}_{L^2(\Omega)} + \norm{z}_{H^1(\Omega)}).
\end{equation}
For $u \in H^2(\Omega)$, from (\ref{jumpineq2}) and the stability of the $L^2$-projection
\begin{equation*}
\norm{\Pi_h u}^2_V = \J(\Pi_h u,\Pi_h u) + \norm{hk^2 \Pi_h u}^2_{L^2(\Omega)} \le C(h^2 \norm{u}^2_{H^2(\Omega)} + \norm{hk^2 u}^2_{L^2(\Omega)}) 
\end{equation*}
implies
\begin{equation}\label{upperv}
\norm{\Pi_h u}_V \le C h (\norm{u}_{H^2(\Omega)} + k^2 \norm{u}_{L^2(\Omega)}) = C h \norm{u}_*, 
\end{equation}
where $\norm{u}_*$ is defined as in (\ref{starnorm}). 

\begin{lemma}\label{tnormerror}
	Let $u\in H^2(\Omega)$ be the solution to (\ref{helmholtz}) and $(u_h,z_h)\in V_h \times W_h$ be the solution to (\ref{weakform}). Then there exists $C>0$ such that for all $h\in(0,1)$
	$$
	\tnorm{(u_h-\Pi_h u,z_h)} \le C h \norm{u}_*.
	$$
	\begin{proof}
		Due to the inf-sup condition (\ref{inf_sup}) it is enough to prove that for $(v,w)\in V_h \times W_h$,
		$$
		A[(u_h-\Pi_h u,z_h),(v,w)] \le C h \norm{u}_* \tnorm{(v,w)}.
		$$
		The weak form of (\ref{helmholtz}) implies that
		\begin{equation*}
		A[(u_h-\Pi_h u,z_h),(v,w)] = (u-\Pi_h u,v)_\omega + G(u-\Pi_h u,w) - s(\Pi_h u,v).
		\end{equation*}
		Using (\ref{interp}) we bound the first term to get
		$$
		(u-\Pi_h u,v)_\omega \le C h^2 \norm{u}_{H^2(\Omega)} \norm{v}_\omega.
		$$
		For the second term we use the $L^2$-orthogonality property of $\Pi_h$, and (\ref{interp}) to obtain
		\begin{equation*}
		G(u-\Pi_h u,w) = (\nabla (u-\Pi_h u), \nabla w)_{L^2(\Omega)}
		\le C h \norm{w}_W \norm{u}_{H^2(\Omega)},
		\end{equation*}
		while for the last term we employ (\ref{upperv}) to estimate
		$$
		s(\Pi_h u,v) \le \norm{\Pi_h u}_V \norm{v}_V \leq C h \norm{u}_* \norm{v}_V.
		$$
	\end{proof}
\end{lemma}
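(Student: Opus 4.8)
The plan is to use the inf-sup stability (\ref{inf_sup}) of $A$ on $V_h \times W_h$: it suffices to show that
$$
A[(u_h-\Pi_h u, z_h),(v,w)] \le C h \norm{u}_* \tnorm{(v,w)}
$$
for every $(v,w) \in V_h \times W_h$. First I would set up the error equation. Since $(u_h,z_h)$ solves (\ref{weakform}) and
$$
A[(\Pi_h u, 0),(v,w)] = (\Pi_h u, v)_\omega + s(\Pi_h u, v) + G(\Pi_h u, w),
$$
subtracting and using the two consistency facts $q|_\omega = u|_\omega$ and $\pair{f,w} = G(u,w)$ — the latter because $u$ solves the Helmholtz equation (\ref{helmholtz}) and $w \in H^1_0(\Omega)$ — gives
$$
A[(u_h-\Pi_h u, z_h),(v,w)] = (u-\Pi_h u, v)_\omega + G(u-\Pi_h u, w) - s(\Pi_h u, v).
$$
It then remains to bound these three terms, each by $C h \norm{u}_*$ times one of $\norm{v}_\omega$, $\norm{w}_W$, $\norm{v}_V$, all of which are $\le \tnorm{(v,w)}$ by the definition of the triple norm.

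The first and third terms are immediate. For the first, Cauchy--Schwarz and the $L^2$-approximation estimate (\ref{interp}) (with $k=2$, $m=0$) give $(u-\Pi_h u, v)_\omega \le \norm{u-\Pi_h u}_{L^2(\Omega)}\norm{v}_\omega \le C h^2 \norm{u}_{H^2(\Omega)} \norm{v}_\omega$, which for $h<1$ is controlled by $C h \norm{u}_* \norm{v}_\omega$. For the third, Cauchy--Schwarz with respect to the form $s$ together with the bound (\ref{upperv}) gives $s(\Pi_h u, v) \le \norm{\Pi_h u}_V \norm{v}_V \le C h \norm{u}_* \norm{v}_V$.

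The second term is where the only genuine subtlety lies. Writing $G(u-\Pi_h u, w) = (\nabla(u-\Pi_h u),\nabla w)_{L^2(\Omega)} - k^2 (u-\Pi_h u, w)_{L^2(\Omega)}$, a direct estimate of the zeroth-order part would cost a factor $k^2 h^2 \norm{u}_{H^2(\Omega)}$, which cannot in general be absorbed into $h\norm{u}_*$. The resolution is that $w \in W_h \subset V_h$, so by the $L^2$-orthogonality of $\Pi_h$ the term $(u-\Pi_h u, w)_{L^2(\Omega)}$ vanishes identically and only the gradient term survives; by (\ref{interp}) (with $k=2$, $m=1$) it is bounded by $\norm{u-\Pi_h u}_{H^1(\Omega)}\norm{\nabla w}_{L^2(\Omega)} \le C h \norm{u}_{H^2(\Omega)}\norm{w}_W \le C h \norm{u}_* \norm{w}_W$. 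Collecting the three bounds and applying (\ref{inf_sup}) finishes the argument. I expect the bookkeeping in the error equation and the observation that $L^2$-orthogonality removes the $k^2$ contribution to be the points needing the most care; everything else is a routine application of the stability and interpolation properties recorded above.
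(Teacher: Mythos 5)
Your proposal is correct and follows the paper's proof essentially verbatim: the same reduction via the inf-sup condition, the same error equation, and the same three bounds, including the key observation that the $L^2$-orthogonality of $\Pi_h$ (applicable since $W_h \subset V_h$) eliminates the $k^2$ zeroth-order contribution in $G(u-\Pi_h u, w)$. No discrepancies to report.
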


\begin{theorem}\label{L2error}
	Let $\omega \subset B \subset \Omega$ be defined as in \cref{cor_Holder}. Let $u\in H^2(\Omega)$ be the solution to (\ref{helmholtz}) and $(u_h,z_h)\in V_h \times W_h$ be the solution to (\ref{weakform}). Then there are $C>0$ and $\alpha \in (0,1)$ such that for all $k,\, h>0$ with $k h \lesssim 1$
	$$
	\norm{u-u_h}_{L^2(B)} \le C (h k)^\alpha k^{\alpha-2} \norm{u}_*.
	$$
	
	\begin{proof}
		Consider the residual $\pair{r,w}=G(u_h-u,w)=G(u_h,w)-\pair{f,w},\, w\in H^1_0(\Omega)$. Taking $v=0$ in (\ref{weakform}) we get $G(u_h,w) = \pair{f,w} + s^*(z_h,w),\, w\in W_h$ which implies that
		\begin{align*}
		\pair{r,w} &= G(u_h,w) - \pair{f,w} - G(u_h,\pi_h w) + G(u_h,\pi_h w) \\
		&= G(u_h,w-\pi_h w) - \pair{f,w-\pi_h w} + s^*(z_h,\pi_h w),\quad w\in H^1_0(\Omega).
		\end{align*}
		Using (\ref{lowerv}) and (\ref{interp}) we estimate the first term
		\begin{align*}
		G(u_h,w-\pi_h w) &\le C \norm{u_h}_V (h^{-1} \norm{w-\pi_h w}_{L^2(\Omega)} + \norm{w-\pi_h w}_{H^1(\Omega)}) \\
		&\le C \norm{u_h}_V \norm{w}_{H^1(\Omega)} \le C h \norm{u}_* \norm{w}_{H^1(\Omega)},
		\end{align*}
		since, due to \cref{tnormerror} and (\ref{upperv})
		\begin{equation*}
		\norm{u_h}_V \le \norm{u_h-\Pi_h u}_V + \norm{\Pi_h u}_V
		\le C h \norm{u}_*.
		\end{equation*}
		The second term is bounded by using (\ref{interp})
		$$
		\pair{f,w-\pi_h w} \le \norm{f}_{L^2(\Omega)} \norm{w-\pi_h w}_{L^2(\Omega)} \le C h \norm{f}_{L^2(\Omega)} \norm{w}_{H^1(\Omega)}
		$$
		and the last term by using \cref{tnormerror} and the $H^1$-stability (\ref{H1stab})
		$$
		s^*(z_h,\pi_h w) \le \norm{z_h}_W \norm{\pi_h w}_W \le C h \norm{u}_* \norm{w}_{H^1(\Omega)}.
		$$
		Hence the following residual norm estimate holds
		$$
		\norm{r}_{H^{-1}(\Omega)} \le C h (\norm{u}_* + \norm{f}_{L^2(\Omega)})  \le C h \norm{u}_*.
		$$
		Using the continuum estimate in \cref{shifted3b} for $u-u_h$ we obtain the following error estimate
		$$
		\norm{u-u_h}_{L^2(B)} \le C (\norm{u-u_h}_{L^2(\omega)} + \norm{r}_{H^{-1}(\Omega)})^\alpha \norm{u-u_h}_{L^2(\Omega)}^{1-\alpha}.
		$$
		By (\ref{interp}) and \cref{tnormerror} we have the bounds
		\begin{align*}
		\norm{u-u_h}_{L^2(\omega)} &\le \norm{u-\Pi_h u}_{L^2(\omega)} + \norm{u_h-\Pi_h u}_{L^2(\omega)} \\
		&\le C h \norm{u}_{H^1(\Omega)} + C h \norm{u}_* \\
		&\le  C h \norm{u}_*
		\end{align*}
		and
		\begin{align*}
		\norm{u-u_h}_{L^2(\Omega)} &\le \norm{u-\Pi_h u}_{L^2(\Omega)} + \norm{u_h-\Pi_h u}_{L^2(\Omega)} \\
		&\le C h^2 \norm{u}_{H^2(\Omega)} + C h^{-1} k^{-2} \norm{u_h-\Pi_h u}_V \\
		&\le C ((h^2+k^{-2})\norm{u}_{H^2(\Omega)} + \norm{u}_{L^2(\Omega)}) \\
		&\le C k^{-2} \norm{u}_*
		\end{align*}
		thus leading to the conclusion.
	\end{proof}
\end{theorem}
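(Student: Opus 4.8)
The plan is to apply the shifted three-ball inequality of \cref{shifted3b} to the error $e = u - u_h$ and then bound each of the three quantities it involves. Since $u_h$ is only piecewise affine, $\Delta e + k^2 e$ must be read distributionally as the residual functional $\pair{r,w} := G(u_h - u, w) = G(u_h, w) - \pair{f,w}$ acting on $w \in H^1_0(\Omega)$, so the first and main task is to prove $\norm{r}_{H^{-1}(\Omega)} \lesssim h \norm{u}_*$.

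For this I would use the discrete problem \eqref{weakform} with $v = 0$, which gives $G(u_h, w) = \pair{f,w} + s^*(z_h, w)$ for $w \in W_h$, and then split a general $w \in H^1_0(\Omega)$ as $w = (w - \pi_h w) + \pi_h w$ with the Scott--Zhang interpolant $\pi_h$. This yields
$$
\pair{r,w} = G(u_h, w - \pi_h w) - \pair{f, w - \pi_h w} + s^*(z_h, \pi_h w).
$$
I would estimate the three terms using, in order: the bound \eqref{lowerv} together with the approximation property \eqref{interp} and the a priori control $\norm{u_h}_V \le \norm{u_h - \Pi_h u}_V + \norm{\Pi_h u}_V \lesssim h\norm{u}_*$ coming from \cref{tnormerror} and \eqref{upperv}; Cauchy--Schwarz with \eqref{interp} and $\norm{f}_{L^2(\Omega)} \le \norm{u}_*$; and the $H^1$-stability \eqref{H1stab} of $\pi_h$ together with $\norm{z_h}_W \lesssim h\norm{u}_*$ (again from \cref{tnormerror}). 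Each contributes $\lesssim h\norm{u}_* \norm{w}_{H^1(\Omega)}$, hence $\norm{r}_{H^{-1}(\Omega)} \lesssim h\norm{u}_*$.

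Next I would treat the data term, $\norm{e}_{L^2(\omega)} \le \norm{u - \Pi_h u}_{L^2(\omega)} + \norm{u_h - \Pi_h u}_{L^2(\omega)} \lesssim h\norm{u}_*$, using \eqref{interp} and the fact that the $L^2(\omega)$-norm is dominated by $\tnorm{\cdot}$ so that \cref{tnormerror} applies, and the a priori term $\norm{e}_{L^2(\Omega)} \le \norm{u - \Pi_h u}_{L^2(\Omega)} + \norm{u_h - \Pi_h u}_{L^2(\Omega)}$: the first piece is $\lesssim h^2 \norm{u}_{H^2(\Omega)}$ by \eqref{interp}, and for the second I use that $\norm{\cdot}_V$ contains the term $\norm{hk^2\cdot}_{L^2(\Omega)}$, so $\norm{u_h - \Pi_h u}_{L^2(\Omega)} \le h^{-1} k^{-2} \norm{u_h - \Pi_h u}_V \lesssim k^{-2} \norm{u}_*$. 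Under $kh \lesssim 1$ the term $h^2\norm{u}_{H^2(\Omega)}$ is absorbed into $k^{-2}\norm{u}_{H^2(\Omega)} \le k^{-2}\norm{u}_*$, so $\norm{e}_{L^2(\Omega)} \lesssim k^{-2}\norm{u}_*$. Plugging the three bounds into \cref{shifted3b},
$$
\norm{e}_{L^2(B)} \lesssim \left( h\norm{u}_* \right)^\alpha \left( k^{-2}\norm{u}_* \right)^{1-\alpha} = h^\alpha k^{-2(1-\alpha)} \norm{u}_* = (hk)^\alpha k^{\alpha-2} \norm{u}_*,
$$
which is the assertion.

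The genuine work is concentrated in the residual estimate: that is the only place the discrete equations and the consistency of the stabilizers (through \cref{tnormerror}) enter, and the only place where a full power of $h$ rather than merely an $O(1)$ bound must be extracted. Everything else is bookkeeping, the sole structural assumption being $kh \lesssim 1$, which is used exactly once, to absorb the interpolation error $h^2\norm{u}_{H^2(\Omega)}$ of the a priori term into $k^{-2}\norm{u}_*$. I would pay attention to the exponent arithmetic at the end so that the final bound comes out in the stated form $(hk)^\alpha k^{\alpha-2}\norm{u}_*$.
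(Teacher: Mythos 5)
Your proposal is correct and follows essentially the same route as the paper's proof: the same residual decomposition via $w = (w-\pi_h w) + \pi_h w$, the same three-term estimate using \eqref{lowerv}, \eqref{interp}, \eqref{H1stab}, \cref{tnormerror} and \eqref{upperv}, and the same application of \cref{shifted3b} with the bounds $\norm{u-u_h}_{L^2(\omega)} \lesssim h\norm{u}_*$ and $\norm{u-u_h}_{L^2(\Omega)} \lesssim k^{-2}\norm{u}_*$. The only (welcome) addition is that you make explicit where the hypothesis $kh \lesssim 1$ enters, which the paper leaves implicit in the chain of inequalities for $\norm{u-u_h}_{L^2(\Omega)}$.
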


\begin{theorem}\label{H1error}
	Let $\omega \subset B \subset \Omega$ be defined as in \cref{cor_Holder}. Let $u\in H^2(\Omega)$ be the solution to (\ref{helmholtz}) and $(u_h,z_h)\in V_h \times W_h$ be the solution to (\ref{weakform}). Then there are $C>0$ and $\alpha \in (0,1)$ such that for all $k,\, h>0$ with $k h \lesssim 1$
	$$
	\norm{u-u_h}_{H^1(B)} \le C (h k)^\alpha \norm{u}_*.
	$$
\end{theorem}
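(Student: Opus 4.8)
The plan is to mirror the structure of the proof of \cref{L2error}, replacing the shifted three-ball inequality \cref{shifted3b} with the unshifted conditional H\"older estimate \cref{cor_Holder} (or its improved version \cref{cor_Holder_impr}), and to track the resulting powers of $h$ and $k$. First I would record that the triple-norm bound \cref{tnormerror} already controls $\norm{u_h - \Pi_h u}_V$, $\norm{u_h - \Pi_h u}_\omega$ and $\norm{z_h}_W$, each by $Ch\norm{u}_*$, and hence, via \eqref{upperv} and the interpolation estimate \eqref{interp}, it yields $\norm{u_h}_V \le Ch\norm{u}_*$ and the data-size bounds $\norm{u - u_h}_{H^1(\omega)} \le Ch\norm{u}_*$ (using the inverse inequality $\norm{v}_{H^1(\Omega)} \le Ch^{-1}\norm{v}_{L^2(\Omega)}$ on $V_h$ together with the $V$-norm control on $u_h - \Pi_h u$, plus $\norm{u - \Pi_h u}_{H^1(\omega)} \le Ch\norm{u}_{H^2(\Omega)}$).

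Second, I would reuse the residual computation from \cref{L2error} verbatim: writing $\pair{r,w} = G(u_h - u, w) = G(u_h, w - \pi_h w) - \pair{f, w - \pi_h w} + s^*(z_h, \pi_h w)$ for $w \in H^1_0(\Omega)$, and bounding each of the three terms by $Ch\norm{u}_* \norm{w}_{H^1(\Omega)}$ exactly as there (using \eqref{lowerv}, \eqref{interp}, the residual-testing identity $G(u_h,w) = \pair{f,w} + s^*(z_h,w)$ on $W_h$, \cref{tnormerror} and \eqref{H1stab}), one obtains $\norm{\Delta(u-u_h) + k^2(u-u_h)}_{H^{-1}(\Omega)} = \norm{r}_{H^{-1}(\Omega)} \le Ch\norm{u}_*$. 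Third, I would apply \cref{cor_Holder} (or \cref{cor_Holder_impr}) to $u - u_h$, giving
\begin{align*}
\norm{u - u_h}_{H^1(B)} \le C\bigl(\norm{u - u_h}_{H^1(\omega)} + \norm{r}_{H^{-1}(\Omega)}\bigr)^\alpha \norm{u - u_h}_{H^1(\Omega)}^{1-\alpha},
\end{align*}
and then the numerator is $\le Ch\norm{u}_*$ by the bounds above. For the a priori factor $\norm{u - u_h}_{H^1(\Omega)}^{1-\alpha}$ I split $u - u_h = (u - \Pi_h u) + (\Pi_h u - u_h)$: the first piece is $\le Ch\norm{u}_{H^2(\Omega)}$ by \eqref{interp}, and the second is $\le Ch^{-1}\norm{\Pi_h u - u_h}_{L^2(\Omega)} \le Ch^{-1}\cdot Ch^{-1}k^{-2}\norm{\Pi_h u - u_h}_V \cdots$ — here the honest route is the cruder inverse bound $\norm{\Pi_h u - u_h}_{H^1(\Omega)} \le Ch^{-1}\norm{\Pi_h u - u_h}_{L^2(\Omega)}$ combined with $\norm{\Pi_h u - u_h}_{L^2(\Omega)} \le Ck^{-2}\norm{u}_*$ from the $L^2$ argument in \cref{L2error}, leaving $\norm{u-u_h}_{H^1(\Omega)} \le C(h\norm{u}_{H^2(\Omega)} + h^{-1}k^{-2}\norm{u}_*)$. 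Under $kh \lesssim 1$ this is $\le Ch^{-1}k^{-2}\norm{u}_* \le C\norm{u}_*$ once one also uses $h^{-1}k^{-2} \le k^{-1} \lesssim 1$; more simply, $\norm{u-u_h}_{H^1(\Omega)} \le C\norm{u}_*$ suffices since we need no decay from this factor.

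Collecting: $\norm{u - u_h}_{H^1(B)} \le C(h\norm{u}_*)^\alpha (\norm{u}_*)^{1-\alpha} = Ch^\alpha\norm{u}_*$. To produce the stated $(hk)^\alpha$ rather than $h^\alpha$ one only needs $h^\alpha \le (hk)^\alpha$, i.e. $k \gtrsim 1$, which is the standard regime (the constants are uniform for $k$ bounded away from zero as in \eqref{well-posed1}--\eqref{well-posed2}); alternatively, if sharper interpolation estimates on $\omega$ are phrased with the $kh$ factor, the $k$ appears directly. \textbf{The main obstacle} I anticipate is the bookkeeping in the a priori factor: unlike in \cref{L2error}, where the $k^{\alpha-2}$ gain came from the $L^2$ smallness of $\norm{u-u_h}_{L^2(\Omega)}$, here the $H^1$ a priori norm does \emph{not} decay, so one must be careful that the inverse-inequality loss of $h^{-1}$ in passing from $L^2$ to $H^1$ control of $\Pi_h u - u_h$ is fully absorbed by the $kh \lesssim 1$ hypothesis and does not leak into the final exponent — the clean statement survives precisely because we demand no rate from the $(1-\alpha)$ factor, only boundedness by $\norm{u}_*$.
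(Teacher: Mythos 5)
Your overall plan (reuse the residual bound from the proof of \cref{L2error} and feed it into a continuum three-ball estimate) is the right one, but the specific estimate you apply and two of the bounds you feed into it do not hold, and together they break the argument. The inequality you display,
\begin{equation*}
\norm{u - u_h}_{H^1(B)} \le C\bigl(\norm{u - u_h}_{H^1(\omega)} + \norm{r}_{H^{-1}(\Omega)}\bigr)^\alpha \norm{u - u_h}_{H^1(\Omega)}^{1-\alpha},
\end{equation*}
is neither \cref{cor_Holder} nor \cref{cor_Holder_impr}. \cref{cor_Holder} requires the residual in $L^2(\Omega)$, which is not available: $u_h$ is piecewise affine, so $\Delta u_h$ is a distribution with singular contributions supported on the faces and $r \notin L^2(\Omega)$; only $\norm{r}_{H^{-1}(\Omega)} \le C h \norm{u}_*$ is controlled. \cref{cor_Holder_impr} does accept the $H^{-1}$ residual, but it measures $u-u_h$ in $L^2(\omega)$ and $L^2(\Omega)$ and carries a prefactor $k$; you cannot drop the $k$ and simultaneously upgrade the interior norms to $H^1$.

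Even granting such a hybrid estimate, your bound $\norm{u-u_h}_{H^1(\omega)} \le C h \norm{u}_*$ fails: \cref{tnormerror} controls $\norm{u_h - \Pi_h u}_{L^2(\omega)} \le C h \norm{u}_*$, and the inverse inequality then gives only $\norm{u_h - \Pi_h u}_{H^1(\omega)} \le C h^{-1}\cdot C h \norm{u}_* = C\norm{u}_*$ --- the factor $h$ is consumed, the data term is $O(1)$, and no rate survives. The same problem appears in your a priori factor: your step $h^{-1}k^{-2} \le k^{-1}$ is equivalent to $hk \ge 1$, i.e. the opposite of the hypothesis $kh \lesssim 1$, so $\norm{\Pi_h u - u_h}_{H^1(\Omega)} \le C h^{-1}k^{-2}\norm{u}_*$ is not bounded by $C\norm{u}_*$ in the regime considered. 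These two obstructions are precisely why the paper proves and uses \cref{cor_Holder_impr}: it puts the $H^1(B)$ norm on the left while keeping only $L^2$ norms of $u-u_h$ on the right (which are $O(h\norm{u}_*)$ on $\omega$ and $O(k^{-2}\norm{u}_*)$ on $\Omega$, by the bounds already established in \cref{L2error}) together with $\norm{r}_{H^{-1}(\Omega)}$, at the price of the prefactor $k$; the arithmetic $k\, h^\alpha (k^{-2}+h)^{1-\alpha} \le C (hk)^\alpha$ under $kh \lesssim 1$ then gives exactly the stated rate. Replacing your third step by this application of \cref{cor_Holder_impr} closes the proof.
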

\begin{proof}
	We employ a similar argument as in the proof of \cref{L2error} with the same estimates for the residual norm and the $L^2$-errors in $\omega$ and $\Omega$, only now using the continuum estimate in \cref{cor_Holder_impr} to obtain
	\begin{align*}
	\norm{u-u_h}_{H^1(B)} &\le C k ( \norm{u-u_h}_{L^2(\omega)} + \norm{r}_{H^{-1}(\Omega)})^\alpha ( \norm{u-u_h}_{L^2(\Omega)} + \norm{r}_{H^{-1}(\Omega)})^{1-\alpha}
	\\
	&\le C k h^\alpha (k^{-2}+h)^{1-\alpha} \norm{u}_*,
	\end{align*}
	which ends the proof.
\end{proof}

Let us remark that if we make the assumption $k^2 h \lesssim 1$ then the estimate in \cref{H1error} becomes
$$
\norm{u-u_h}_{H^1(B)} \le C (h k^2)^\alpha k^{-1} \norm{u}_*,
$$
and combining \cref{L2error} and \cref{H1error} we obtain the following result.

\begin{corollary}
	Let $\omega \subset B \subset \Omega$ be defined as in \cref{cor_Holder}. Let $u\in H^2(\Omega)$ be the solution to (\ref{helmholtz}) and $(u_h,z_h)\in V_h \times W_h$ be the solution to (\ref{weakform}). Then there are $C>0$ and $\alpha \in (0,1)$ such that for all $k,\, h>0$ with $k^2 h \lesssim 1$
	$$
	k\norm{u-u_h}_{L^2(B)} + \norm{u-u_h}_{H^1(B)} \le C (h k^2)^\alpha k^{-1} \norm{u}_*.
	$$
\end{corollary}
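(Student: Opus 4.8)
The plan is to combine the two convergence estimates already proved, \cref{L2error} and \cref{H1error}, under the stronger mesh condition $k^2 h \lesssim 1$, and then reconcile the powers of $k$ appearing in each. First I observe that $k^2 h \lesssim 1$ implies $kh \lesssim 1$ (treating the cases $k \le 1$ and $k \ge 1$ separately: in the first $kh \le h$, in the second $kh \le k^2 h$), so both theorems are applicable.

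For the $L^2$-term, I would start from the bound $\norm{u-u_h}_{L^2(B)} \le C (hk)^\alpha k^{\alpha-2}\norm{u}_*$ of \cref{L2error}, multiply through by $k$, and rewrite the exponent: $k\cdot(hk)^\alpha k^{\alpha-2} = h^\alpha k^{2\alpha-1} = (hk^2)^\alpha k^{-1}$. This already gives $k\norm{u-u_h}_{L^2(B)} \le C (hk^2)^\alpha k^{-1}\norm{u}_*$; note this step needs only $kh \lesssim 1$.

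For the $H^1$-term, the bound $\norm{u-u_h}_{H^1(B)} \le C(hk)^\alpha\norm{u}_*$ of \cref{H1error} is not yet in the desired shape, since rewriting produces $(hk^2)^\alpha k^{-\alpha}$ rather than $(hk^2)^\alpha k^{-1}$. Instead I would return to the sharper intermediate estimate obtained in the proof of \cref{H1error}, namely $\norm{u-u_h}_{H^1(B)} \le C k h^\alpha (k^{-2}+h)^{1-\alpha}\norm{u}_*$, and invoke $k^2 h \lesssim 1$, i.e. $h \lesssim k^{-2}$, to bound $k^{-2}+h \lesssim k^{-2}$. This yields $\norm{u-u_h}_{H^1(B)} \le C k h^\alpha k^{-2(1-\alpha)}\norm{u}_* = C (hk^2)^\alpha k^{-1}\norm{u}_*$, precisely the form recorded in the remark preceding the statement. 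Summing the two bounds gives the claim.

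There is no substantive obstacle here; the only point requiring care is the bookkeeping of the exponents of $k$ and the recognition that the stronger mesh condition $k^2 h \lesssim 1$ is used only to absorb the factor $(k^{-2}+h)^{1-\alpha}$ in the $H^1$ estimate, whereas the $L^2$ estimate already produces the stated form after multiplication by $k$ under the milder condition $kh \lesssim 1$.
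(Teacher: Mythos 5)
Your proposal is correct and follows essentially the same route as the paper: the remark preceding the corollary obtains the $H^1$ bound in the form $C(hk^2)^\alpha k^{-1}\norm{u}_*$ precisely by absorbing $(k^{-2}+h)^{1-\alpha}$ via $h\lesssim k^{-2}$ in the intermediate estimate of \cref{H1error}, and the $L^2$ part is the same exponent bookkeeping after multiplying \cref{L2error} by $k$. The only (harmless) point left implicit in both your argument and the paper's is that the two exponents $\alpha$ from \cref{L2error} and \cref{H1error} may differ and one should take their minimum.
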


Comparing with the well-posed boundary value problem (\ref{helmholtzbvp}) and the sharp bounds (\ref{well-posed1}) and (\ref{well-posed2}), we note that the $k^{-1} \norm{u}_*$ term in the above estimate is analogous to the well-posed case term $\norm{f}_{L^2(\Omega)}$.

\subsection{Data perturbations} The analysis above can also handle the perturbed data
$$
\tilde{q} = q + \delta q,\quad \tilde{f} = f + \delta f, 
$$
with the unperturbed data $q,\, f$ in (\ref{helmholtz}), and perturbations $\delta q \in L^2(\omega),\, \delta f \in H^{-1}(\Omega)$ measured by
$$
\delta (\tilde{q},\tilde{f}) = \norm{\delta q}_\omega + \norm{\delta f}_{H^{-1}(\Omega)}.
$$
The saddle points $(u,z)\in V_h \times W_h$ of the perturbed Lagrangian $L_{\tilde{q},\tilde{f}}$ satisfy
\begin{equation}\label{pweakform}
A[(u,z),(v,w)] = (\tilde{q},v)_{\omega} + \pair{\tilde{f},w},\quad (v,w)\in V_h \times W_h.
\end{equation}

\begin{lemma}\label{ptnormerror}
	Let $u\in H^2(\Omega)$ be the solution to the unperturbed problem (\ref{helmholtz}) and $(u_h,z_h)\in V_h \times W_h$ be the solution to the perturbed problem (\ref{pweakform}). Then there exists $C>0$ such that for all $h\in(0,1)$
	$$
	\tnorm{(u_h-\Pi_h u,z_h)} \le C (h \norm{u}_* +\delta(\tilde{q},\tilde{f})).
	$$
	\begin{proof}
		Proceeding as in the proof of \cref{tnormerror}, the weak form gives
		\begin{align*}
		A[(u_h-\Pi_h u,z_h),(v,w)] &= (u-\Pi_h u,v)_\omega + G(u-\Pi_h u,w) - s(\Pi_h u,v)
		\\
		&+ (\delta q,v)_\omega + \pair{\delta f,w}.
		\end{align*}
		We bound the perturbation terms by
		\begin{align*}
		(\delta q,v)_\omega + \pair{\delta f,w} &\le \norm{\delta q}_\omega \norm{v}_\omega + C\norm{\delta f}_{H^{-1}(\Omega)} \norm{w}_W \\
		&\le C \delta(\tilde{q},\tilde{f}) \tnorm{(v,w)}
		\end{align*}
		and we conclude by using the previously derived bounds for the other terms.
	\end{proof}
\end{lemma}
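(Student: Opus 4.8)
The plan is to follow the proof of \cref{tnormerror} essentially line by line, isolating the two extra contributions produced by the data perturbations. As there, the inf-sup condition (\ref{inf_sup}) reduces the statement to showing that for every $(v,w)\in V_h\times W_h$ one has
$$
A[(u_h-\Pi_h u,z_h),(v,w)]\le C\bigl(h\norm{u}_*+\delta(\tilde q,\tilde f)\bigr)\tnorm{(v,w)}.
$$

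To evaluate the left-hand side I would use that $u\in H^2(\Omega)$ solves the \emph{unperturbed} problem (\ref{helmholtz}), so that $G(u,w)=\pair{f,w}$ for all $w\in H^1_0(\Omega)$, together with the perturbed weak formulation (\ref{pweakform}) tested against $(v,w)$. Subtracting, the algebra is identical to that in \cref{tnormerror}, except that the right-hand side of (\ref{pweakform}) carries the additional data $\delta q=\tilde q-q$ and $\delta f=\tilde f-f$, so one lands on
$$
A[(u_h-\Pi_h u,z_h),(v,w)] = (u-\Pi_h u,v)_\omega + G(u-\Pi_h u,w) - s(\Pi_h u,v) + (\delta q,v)_\omega + \pair{\delta f,w}.
$$
The first three terms are exactly the ones appearing in \cref{tnormerror}, and are bounded, using the interpolation estimate (\ref{interp}), the $L^2$-orthogonality of $\Pi_h$, and the bound (\ref{upperv}), by $Ch\norm{u}_*\tnorm{(v,w)}$.

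For the two genuinely new terms, Cauchy--Schwarz in $L^2(\omega)$ gives $(\delta q,v)_\omega\le\norm{\delta q}_\omega\norm{v}_\omega$, while the $H^{-1}$--$H^1$ duality pairing gives $\pair{\delta f,w}\le\norm{\delta f}_{H^{-1}(\Omega)}\norm{w}_{H^1(\Omega)}$; since $w\in H^1_0(\Omega)$, the Poincaré inequality lets one replace $\norm{w}_{H^1(\Omega)}$ by $C\norm{w}_W=C\norm{\nabla w}_{L^2(\Omega)}$. As both $\norm{v}_\omega$ and $\norm{w}_W$ are controlled by $\tnorm{(v,w)}$, the perturbation terms contribute at most $C\delta(\tilde q,\tilde f)\tnorm{(v,w)}$, and adding the two estimates closes the argument. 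There is essentially no obstacle here; the only step that deserves a moment's thought is the duality bound for $\pair{\delta f,w}$, where one must recall that $\norm{\cdot}_W$ is merely the $H^1$-seminorm and invoke the Poincaré inequality (legitimate because $w$ vanishes on $\p\Omega$) to reach the full $H^1$-norm.
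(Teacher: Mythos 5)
Your proposal is correct and follows the paper's proof essentially verbatim: the same decomposition of $A[(u_h-\Pi_h u,z_h),(v,w)]$ into the three terms from \cref{tnormerror} plus the two perturbation terms, with the latter bounded by Cauchy--Schwarz and $H^{-1}$--$H^1$ duality. Your explicit mention of the Poincar\'e inequality to pass from $\norm{w}_{H^1(\Omega)}$ to $\norm{w}_W$ is precisely the constant $C$ the paper absorbs in its bound $C\norm{\delta f}_{H^{-1}(\Omega)}\norm{w}_W$.
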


\begin{theorem}\label{L2error_perturbed}
	Let $\omega \subset B \subset \Omega$ be defined as in \cref{cor_Holder}. Let $u\in H^2(\Omega)$ be the solution to the unperturbed problem (\ref{helmholtz}) and $(u_h,z_h)\in V_h \times W_h$ be the solution to the perturbed problem (\ref{pweakform}). Then there are $C>0$ and $\alpha \in (0,1)$ such that for all $k,\, h>0$ with $k h \lesssim 1$
	$$
	\norm{u-u_h}_{L^2(B)} \le C (h k)^\alpha k^{\alpha-2} ( \norm{u}_* +  h^{-1} \delta(\tilde{q},\tilde{f}) ).
	$$
	\begin{proof}
		Following the proof of \cref{L2error}, the residual satisfies
		$$
		\pair{r,w} = G(u_h,w-\pi_h w) - \pair{f,w-\pi_h w} + s^*(z_h,\pi_h w) + \pair{\delta f, \pi_h w},\quad w\in H^1_0(\Omega)
		$$
		and
		$$
		\norm{r}_{H^{-1}(\Omega)} \le C (\norm{u_h}_V + h\norm{f}_{L^2(\Omega)} + \norm{z_h}_W + \norm{\delta f}_{H^{-1}(\Omega)}).
		$$
		Bounding the first term in the right-hand side by \cref{ptnormerror} and (\ref{upperv})
		\begin{equation*}
		\norm{u_h}_V \le \norm{u_h-\Pi_h u}_V + \norm{\Pi_h u}_V
		\le C (h \norm{u}_* +\delta(\tilde{q},\tilde{f}))
		\end{equation*}
		and the third one by \cref{ptnormerror} again, we obtain
		$$
		\norm{r}_{H^{-1}(\Omega)} \le C h(\norm{u}_* + \norm{f}_{L^2(\Omega)}) + C \delta(\tilde{q},\tilde{f}) \le C (h\norm{u}_* + \delta(\tilde{q},\tilde{f})).
		$$
		The continuum estimate in \cref{shifted3b} applied to $u-u_h$ gives
		$$
		\norm{u-u_h}_{L^2(B)} \le C \left( h \norm{u}_* + \delta(\tilde{q},\tilde{f}) \right)^\alpha \norm{u-u_h}_{L^2(\Omega)}^{1-\alpha},
		$$
		where $\norm{u-u_h}_{L^2(\omega)}$ was bounded by using \cref{ptnormerror} and (\ref{interp}). Then the bound
		\begin{align*}
		\norm{u-u_h}_{L^2(\Omega)} &\le \norm{u-\Pi_h u}_{L^2(\Omega)} + \norm{u_h-\Pi_h u}_{L^2(\Omega)} \\
		&\le C ( h^2 \norm{u}_{H^2(\Omega)} + h^{-1} k^{-2} \norm{u_h-\Pi_h u}_V )\\
		&\le C ( h^2 \norm{u}_{H^2(\Omega)} + k^{-2} \norm{u}_* + h^{-1} k^{-2} \delta(\tilde{q},\tilde{f}) ) \\
		&\le C k^{-2} (\norm{u}_* + h^{-1} \delta(\tilde{q},\tilde{f}))
		\end{align*}
		concludes the proof.
	\end{proof}
\end{theorem}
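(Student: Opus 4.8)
The plan is to follow the proof of \cref{L2error} essentially verbatim, keeping track of the two places where the perturbation $\delta(\tilde q,\tilde f)$ enters: first through \cref{ptnormerror}, which replaces \cref{tnormerror} and supplies $\tnorm{(u_h-\Pi_h u,z_h)}\le C(h\norm{u}_*+\delta(\tilde q,\tilde f))$; and second through an additional term $\pair{\delta f,\pi_h w}$ in the residual identity. The structure is: (i) derive a residual bound in $H^{-1}(\Omega)$; (ii) apply the shifted three-ball inequality \cref{shifted3b} to $u-u_h$; (iii) bound the $L^2(\omega)$ and $L^2(\Omega)$ factors using interpolation and \cref{ptnormerror}; (iv) combine.

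For step (i), I would set $\pair{r,w}=G(u_h-u,w)$ for $w\in H^1_0(\Omega)$. Taking $v=0$ in the perturbed equation \eqref{pweakform} gives $G(u_h,w)=\pair{\tilde f,w}+s^*(z_h,w)$ for $w\in W_h$, so inserting $\pm\pi_h w$ yields
\[
\pair{r,w}=G(u_h,w-\pi_h w)-\pair{f,w-\pi_h w}+s^*(z_h,\pi_h w)+\pair{\delta f,\pi_h w}.
\]
The first three terms are treated exactly as in \cref{L2error}, except that $\norm{u_h}_V$ and $\norm{z_h}_W$ are now bounded by $C(h\norm{u}_*+\delta(\tilde q,\tilde f))$ via \cref{ptnormerror} and \eqref{upperv} instead of by $Ch\norm{u}_*$. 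The new term is estimated trivially by $\norm{\delta f}_{H^{-1}(\Omega)}\norm{\pi_h w}_{H^1(\Omega)}\le C\delta(\tilde q,\tilde f)\norm{w}_{H^1(\Omega)}$ using the $H^1$-stability \eqref{H1stab} of $\pi_h$. Together with $\norm{f}_{L^2(\Omega)}\le\norm{u}_*$ this gives $\norm{r}_{H^{-1}(\Omega)}\le C(h\norm{u}_*+\delta(\tilde q,\tilde f))$.

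For steps (ii)--(iv): since $\Delta(u-u_h)+k^2(u-u_h)=r$ in $H^{-1}(\Omega)$, \cref{shifted3b} gives
\[
\norm{u-u_h}_{L^2(B)}\le C(\norm{u-u_h}_{L^2(\omega)}+\norm{r}_{H^{-1}(\Omega)})^\alpha\norm{u-u_h}_{L^2(\Omega)}^{1-\alpha}.
\]
I would bound $\norm{u-u_h}_{L^2(\omega)}$ by $\norm{u-\Pi_h u}_{L^2(\omega)}+\norm{u_h-\Pi_h u}_{L^2(\omega)}\le Ch\norm{u}_{H^1(\Omega)}+C(h\norm{u}_*+\delta(\tilde q,\tilde f))$ using \eqref{interp} and $\norm{u_h-\Pi_h u}_{L^2(\omega)}\le\tnorm{(u_h-\Pi_h u,z_h)}$ from \cref{ptnormerror}, and $\norm{u-u_h}_{L^2(\Omega)}$ by $\norm{u-\Pi_h u}_{L^2(\Omega)}+h^{-1}k^{-2}\norm{u_h-\Pi_h u}_V\le Ch^2\norm{u}_{H^2(\Omega)}+Ch^{-1}k^{-2}(h\norm{u}_*+\delta(\tilde q,\tilde f))$, then absorb $h^2\norm{u}_{H^2(\Omega)}$ into $k^{-2}\norm{u}_*$ using $kh\lesssim1$, to reach $\norm{u-u_h}_{L^2(\Omega)}\le Ck^{-2}(\norm{u}_*+h^{-1}\delta(\tilde q,\tilde f))$. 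Writing $E:=\norm{u}_*+h^{-1}\delta(\tilde q,\tilde f)$, the data factor is $\le ChE$ and the a priori factor is $\le Ck^{-2}E$, so substitution gives $\norm{u-u_h}_{L^2(B)}\le C(hE)^\alpha(k^{-2}E)^{1-\alpha}=Ch^\alpha k^{2\alpha-2}E=C(hk)^\alpha k^{\alpha-2}E$, which is the claim.

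There is no genuine analytic obstacle here — everything rests on \cref{shifted3b} and on results already established. The only point requiring care is bookkeeping: one must notice that $\delta(\tilde q,\tilde f)$ enters both the residual estimate and the consistency bounds of \cref{ptnormerror} at order $h^0$ rather than $h^1$, which is exactly why it appears weighted by $h^{-1}$ relative to $\norm{u}_*$ in the final estimate.
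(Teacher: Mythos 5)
Your proposal is correct and follows the paper's own proof essentially step for step: the same residual identity with the extra $\pair{\delta f,\pi_h w}$ term, the same use of \cref{ptnormerror} and \eqref{upperv} to get $\norm{r}_{H^{-1}(\Omega)}\le C(h\norm{u}_*+\delta(\tilde q,\tilde f))$, the same application of \cref{shifted3b}, and the same bounds on the $L^2(\omega)$ and $L^2(\Omega)$ factors. The only difference is cosmetic: you make the final exponent bookkeeping with $E=\norm{u}_*+h^{-1}\delta(\tilde q,\tilde f)$ explicit, which the paper leaves to the reader.
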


\begin{theorem}\label{H1error_perturbed}
	Let $\omega \subset B \subset \Omega$ be defined as in \cref{cor_Holder}. Let $u\in H^2(\Omega)$ be the solution to the unperturbed problem (\ref{helmholtz}) and $(u_h,z_h)\in V_h \times W_h$ be the solution to the perturbed problem (\ref{pweakform}). Then there are $C>0$ and $\alpha \in (0,1)$ such that for all $k,\, h>0$ with $k h \lesssim 1$
	$$
	\norm{u-u_h}_{H^1(B)} \le C (h k)^\alpha ( \norm{u}_* +  h^{-1} \delta(\tilde{q},\tilde{f}) ).
	$$
	\begin{proof}
		Following the proof of \cref{L2error_perturbed}, we now use \cref{cor_Holder_impr} to derive
		\begin{align*}
		\norm{u-u_h}_{H^1(B)} &\le C k ( \norm{u-u_h}_{L^2(\omega)} + \norm{r}_{H^{-1}(\Omega)})^\alpha ( \norm{u-u_h}_{L^2(\Omega)} + \norm{r}_{H^{-1}(\Omega)})^{1-\alpha}
		\\
		&\le C k \left(h\norm{u}_* + \delta(\tilde{q},\tilde{f})\right)^{\alpha} \left((k^{-2}+h)(\norm{u}_* +  h^{-1} \delta(\tilde{q},\tilde{f}))\right)^{1-\alpha}
		\\
		&\le C k h^\alpha (k^{-2}+h)^{1-\alpha} (\norm{u}_* +  h^{-1} \delta(\tilde{q},\tilde{f})),
		\end{align*}
		which ends the proof.
	\end{proof}
\end{theorem}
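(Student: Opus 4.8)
The plan is to follow the proof of \cref{L2error_perturbed} verbatim, the only change being that in the last step one invokes the $H^1$-version \cref{cor_Holder_impr} of the conditional stability estimate rather than the $L^2$-version \cref{shifted3b}. First I would reuse, unchanged, the residual bound
\[
\norm{r}_{H^{-1}(\Omega)} \le C\bigl(h\norm{u}_* + \delta(\tilde q,\tilde f)\bigr),
\]
where $\pair{r,w} = G(u_h-u,w)$ for $w\in H^1_0(\Omega)$, obtained in the proof of \cref{L2error_perturbed} from \cref{ptnormerror}, (\ref{lowerv}), (\ref{interp}), (\ref{H1stab}) and (\ref{upperv}) (the residual identity now carries the extra perturbation term $\pair{\delta f,\pi_h w}$), together with the two a priori error bounds derived there,
\[
\norm{u-u_h}_{L^2(\omega)} \le C\bigl(h\norm{u}_* + \delta(\tilde q,\tilde f)\bigr), \qquad \norm{u-u_h}_{L^2(\Omega)} \le Ck^{-2}\bigl(\norm{u}_* + h^{-1}\delta(\tilde q,\tilde f)\bigr),
\]
which come from \cref{ptnormerror}, (\ref{interp}) and (\ref{upperv}). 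Since $G(u_h-u,w) = \pair{\Delta(u-u_h)+k^2(u-u_h),\,w}$ for $w\in H^1_0(\Omega)$, the residual bound is precisely a bound on $\norm{\Delta(u-u_h)+k^2(u-u_h)}_{H^{-1}(\Omega)}$, so \cref{cor_Holder_impr} applies to $u-u_h$.

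Applying \cref{cor_Holder_impr} to $u-u_h$ gives
\[
\norm{u-u_h}_{H^1(B)} \le Ck\bigl(\norm{u-u_h}_{L^2(\omega)} + \norm{r}_{H^{-1}(\Omega)}\bigr)^\alpha \bigl(\norm{u-u_h}_{L^2(\Omega)} + \norm{r}_{H^{-1}(\Omega)}\bigr)^{1-\alpha}.
\]
Into this I would substitute the three bounds above and pull out the common factor $\norm{u}_* + h^{-1}\delta(\tilde q,\tilde f)$, using $h\norm{u}_* + \delta(\tilde q,\tilde f) = h(\norm{u}_* + h^{-1}\delta(\tilde q,\tilde f))$: the first parenthesis becomes $\le Ch^\alpha(\norm{u}_*+h^{-1}\delta(\tilde q,\tilde f))^\alpha$ and the second $\le C(k^{-2}+h)^{1-\alpha}(\norm{u}_*+h^{-1}\delta(\tilde q,\tilde f))^{1-\alpha}$, whence
\[
\norm{u-u_h}_{H^1(B)} \le Ckh^\alpha(k^{-2}+h)^{1-\alpha}\bigl(\norm{u}_* + h^{-1}\delta(\tilde q,\tilde f)\bigr).
\]

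The only delicate point — and the only place where a careless power count could go astray — is reducing the prefactor $kh^\alpha(k^{-2}+h)^{1-\alpha}$ to $C(hk)^\alpha$ under the standing assumptions $kh\lesssim 1$ and $k$ bounded away from zero (the latter is already needed for \cref{cor_Holder_impr}). I would split on whether $h\le k^{-2}$ or $h\ge k^{-2}$. In the first case $(k^{-2}+h)^{1-\alpha}\le 2k^{-2(1-\alpha)}$, so the prefactor is $\le Ck^{2\alpha-1}h^\alpha = C(hk)^\alpha k^{\alpha-1}\le C(hk)^\alpha$ since $\alpha<1$ and $k$ is bounded below; in the second case $(k^{-2}+h)^{1-\alpha}\le 2h^{1-\alpha}$, so the prefactor is $\le Ckh = C(hk)^\alpha(hk)^{1-\alpha}\le C(hk)^\alpha$ since $hk\lesssim 1$. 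Either way, $\norm{u-u_h}_{H^1(B)}\le C(hk)^\alpha(\norm{u}_* + h^{-1}\delta(\tilde q,\tilde f))$, which is the claim. I do not anticipate any genuine obstacle: all the analytic content has been pushed into the continuum estimate \cref{cor_Holder_impr} and the energy estimate \cref{ptnormerror}, and what remains is the bookkeeping of powers of $h$ and $k$ just described.
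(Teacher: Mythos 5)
Your proposal is correct and follows essentially the same route as the paper: reuse the residual bound and the $L^2$-error bounds from the proof of \cref{L2error_perturbed}, apply \cref{cor_Holder_impr} to $u-u_h$, and factor out $\norm{u}_*+h^{-1}\delta(\tilde q,\tilde f)$ to arrive at the prefactor $Ckh^\alpha(k^{-2}+h)^{1-\alpha}$. Your explicit case split ($h\le k^{-2}$ versus $h\ge k^{-2}$) to reduce this prefactor to $C(hk)^\alpha$ is a correct elaboration of a step the paper leaves implicit.
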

Analogous to the unpolluted case, if $k^2 h \lesssim 1$ the above result becomes
$$
\norm{u-u_h}_{H^1(B)} \le C (h k^2)^\alpha k^{-1} (\norm{u}_* +  h^{-1} \delta(\tilde{q},\tilde{f})),
$$
and combining \cref{L2error_perturbed} and \cref{H1error_perturbed} gives the following.

\begin{corollary}
	Let $\omega \subset B \subset \Omega$ be defined as in \cref{cor_Holder}. Let $u\in H^2(\Omega)$ be the solution to the unperturbed problem (\ref{helmholtz}) and $(u_h,z_h)\in V_h \times W_h$ be the solution to the perturbed problem (\ref{pweakform}). Then there are $C>0$ and $\alpha \in (0,1)$ such that for all $k,\, h>0$ with $k^2 h \lesssim 1$
	$$
	k\norm{u-u_h}_{L^2(B)} + \norm{u-u_h}_{H^1(B)} \le C (h k^2)^\alpha k^{-1} (\norm{u}_* +  h^{-1} \delta(\tilde{q},\tilde{f})).
	$$
\end{corollary}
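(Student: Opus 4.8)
The plan is to obtain the corollary by simply combining \cref{L2error_perturbed} and \cref{H1error_perturbed} and then simplifying the right-hand sides under the stronger mesh assumption $k^2 h \lesssim 1$. First I would revisit the $L^2$-bound: multiplying the conclusion of \cref{L2error_perturbed} by $k$ gives $k\norm{u-u_h}_{L^2(B)} \le C (hk)^\alpha k^{\alpha-1}\bigl(\norm{u}_* + h^{-1}\delta(\tilde{q},\tilde{f})\bigr)$, and since $(hk)^\alpha k^{\alpha-1} = h^\alpha k^{2\alpha-1} = (hk^2)^\alpha k^{-1}$, this term is already in the desired form; note that this step needs only $kh \lesssim 1$.

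Next I would treat the $H^1$-term using the intermediate bound obtained inside the proof of \cref{H1error_perturbed}, namely $\norm{u-u_h}_{H^1(B)} \le C\, k\, h^\alpha (k^{-2}+h)^{1-\alpha}\bigl(\norm{u}_* + h^{-1}\delta(\tilde{q},\tilde{f})\bigr)$. Under the hypothesis $k^2 h \lesssim 1$ we have $k^{-2}+h \lesssim k^{-2}$, so $(k^{-2}+h)^{1-\alpha} \lesssim k^{-2(1-\alpha)}$ and therefore $k\,h^\alpha(k^{-2}+h)^{1-\alpha} \lesssim h^\alpha k^{2\alpha-1} = (hk^2)^\alpha k^{-1}$, which is exactly the simplified form of \cref{H1error_perturbed} recorded in the remark preceding the corollary. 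Adding the resulting $L^2$- and $H^1$-estimates then yields the stated inequality.

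The only bookkeeping point is that \cref{L2error_perturbed} and \cref{H1error_perturbed} are each stated with their own exponent $\alpha\in(0,1)$, which need not a priori coincide. This is harmless: for $h$ small the data/residual factor (times the relevant power of $h$) is dominated by the a priori factor $\norm{u}_*$, so a Hölder-type estimate valid with some exponent $\alpha$ remains valid, up to enlarging $C$, with any smaller exponent; hence we may take $\alpha$ to be the minimum of the two exponents. There is no genuine obstacle here --- all the analytic content already resides in \cref{L2error_perturbed} and \cref{H1error_perturbed}, and what remains is the elementary manipulation of the powers of $h$ and $k$.
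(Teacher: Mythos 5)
Your proposal is correct and follows exactly the route the paper takes: multiply the bound of \cref{L2error_perturbed} by $k$ (noting $(hk)^\alpha k^{\alpha-1}=(hk^2)^\alpha k^{-1}$ identically), simplify the intermediate bound $Ckh^\alpha(k^{-2}+h)^{1-\alpha}(\cdot)$ from \cref{H1error_perturbed} using $h\lesssim k^{-2}$, and add. Your remark about reconciling the two H\"older exponents by passing to the smaller one (legitimate here since $hk^2\lesssim 1$ makes $(hk^2)^\alpha$ monotone in the right direction, at the cost of a constant) is a point the paper leaves implicit, and it is handled correctly.
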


\section{Numerical examples}

We illustrate the above theoretical results for the unique continuation problem (\ref{helmholtz}) with some numerical examples. Drawing on previous results in \cite{BurmanSIAM}, we adjust the  stabilizer in (\ref{weakform}) with a fixed stabilization parameter $\gamma>0$ such that $s(u,v) = \gamma \J(u,v) + \gamma h^2k^4 (u,v)_{L^2(\Omega)}$. The error analysis stays unchanged under this rescaling. Various numerical experiments indicate that $\gamma = 10^{-5}$ is a near-optimal value for different kinds of geometries and solutions. The implementation of our method and all the computations have been carried out in FreeFem++ \cite{FreeFem}. The domain $\Omega$ is the unit square, and the triangulation is uniform with alternating left and right diagonals, as shown in \cref{mesh}. The mesh size is taken as the inverse square root of the number of nodes.

In the light of the convexity assumptions in \cref{continuum_estimates}, we shall consider two different geometric settings: one in which the data is continued in the convex direction, inside the convex hull of $\omega$, and one in which the solution is continued in the non-convex direction, outside the convex hull of $\omega$.

In the convex setting, given in \cref{domain_conv}, we take
\begin{equation}\label{convex}
\omega = \Omega \setminus [0.1,0.9] \times [0.25,1],\quad
B = \Omega \setminus [0.1,0.9] \times [0.95,1]
\end{equation}
for continuing the solution inside the convex hull of $\omega$. This example does not correspond exactly to the specific geometric setting in \cref{cor_Holder}, but all the theoretical results are valid in this case as proven in \cref{ex_convex_square} below.

\begin{example}\label{ex_convex_square}
	Let $\omega\subset B \subset \Omega$ be defined by \eqref{convex} (\cref{domain_conv}). Then the stability estimates in \cref{cor_Holder}, \cref{cor_Holder_impr} and \cref{shifted3b} hold true.
	\begin{proof}
		Consider an extended rectangle $\tilde{\Omega} \supset \Omega$ such that the unit square $\Omega$ is centred horizontally and touches the upper side of $\tilde{\Omega}$, and $\tilde{\omega} \supset \omega$ and $\tilde{B} \supset B$ are defined as in \cref{cor_Holder}. Choose a smooth cutoff function $\chi$ such that $\chi=1$ in $\Omega \setminus \omega$ and $\chi=0$ in $\tilde{\Omega} \setminus \Omega$.
		Applying now \cref{cor_Holder} for $\tilde{\omega},\, \tilde{B},\, \tilde{\Omega}$ and $\chi u$ we get
		\begin{align*}
		\norm{u}_{H^1(B\setminus \omega)} &\le C \norm{\chi u}_{H^1(\tilde{B}\setminus \tilde{\omega})}
		\le C (\norm{\chi u}_{H^1(\tilde{\omega})} + 
		\norm{\Delta (\chi u) + k^2 \chi u}_{L^2(\tilde{\Omega})})^\alpha \norm{\chi u}_{H^1(\tilde{\Omega})}^{1-\alpha}
		\\& \le C (\norm{u}_{H^1(\omega)} + 
		\norm{\Delta u + k^2 u}_{L^2(\Omega)})^\alpha \norm{u}_{H^1(\Omega)}^{1-\alpha},
		\end{align*}
		where we have used that the commutator $[\Delta,\chi]u$ is supported in $\omega$.		
		A similar proof is valid for the estimates in \cref{cor_Holder_impr} and \cref{shifted3b}.
	\end{proof}
\end{example}

We will give results for two kinds of solutions: a Gaussian bump centred on the top side of the unit square $\Omega$, given in \cref{ex_gauss}, and a variation of the well-known Hamadard solution given in \cref{ex_hadamard}.

\begin{example}\label{ex_gauss} Let the Gaussian bump
	\begin{equation*}
	u = \exp \left( -\frac{(x-0.5)^2}{2\sigma_x} - \frac{(y-1)^2}{2\sigma_y} \right),\quad \sigma_x=0.01,\sigma_y=0.1,
	\end{equation*} 
	be a non-homogeneous solution of (\ref{helmholtz}), i.e. $f = -\Delta u - k^2 u$ and $q=u|_{\omega}$.
\end{example}

\begin{figure}	
	\centering
	\includegraphics[draft=false, width=0.4\textwidth]{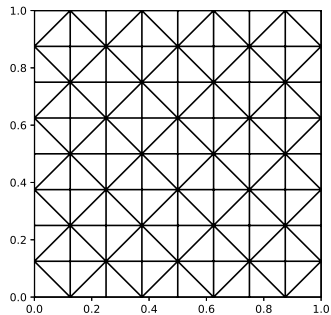}
	\caption{Mesh example.}
	\label{mesh}
\end{figure}

\cref{gauss_conv_10} shows that for \cref{ex_gauss}, when $k=10$, the numerical results strongly agree with the convergence rates expected from \cref{L2error} and \cref{H1error}, and \cref{tnormerror}, i.e. sub-linear convergence for the relative error in the $L^2$- and $H^1$-norms, and quadratic convergence for $\J(u_h,u_h)$. Although in \cref{gauss_conv_50} we do obtain smaller errors and better than expected convergence rates when $k=50$, various numerical experiments indicate that this example's behaviour when increasing the wave number $k$ is rather a particular one. For oscillatory solutions, such as those in \cref{ex_hadamard}, with fixed $n$, or the homogeneous $u = \sin(k x /\sqrt{2}) \cos(k y /\sqrt{2})$, we have noticed that the stability deteriorates when increasing $k$.

In the non-convex setting we let
\begin{equation}\label{non-convex}
\omega = (0.25,0.75) \times (0,0.5),\quad
B = (0.125,0.875) \times (0,0.95),
\end{equation}
and the concentric disks
\begin{equation}\label{disk}
\omega = D((0.5,0.5),0.25),\quad
B = D((0.5,0.5),0.45),
\end{equation}
respectively shown in \cref{domain_nonconv} and \cref{domain_disk}, and we notice from \cref{gauss_nonconv} that the stability strongly deteriorates when one continues the solution outside the convex hull of $\omega$, as the error sizes and rates worsen.

We test the data perturbations by polluting $f$ and $q$ in (\ref{helmholtz}) with uniformly distributed values in $[-h,h]$, respectively $[-h^2,h^2]$, on every node of the mesh. It can be seen in \cref{gauss_perturbation} that the perturbations are visible for an $O(h)$ amplitude, but not for an $O(h^2)$ one.

\begin{figure}
	\begin{subfigure}[t]{0.32\textwidth}
		\includegraphics[draft=false, width=\textwidth]{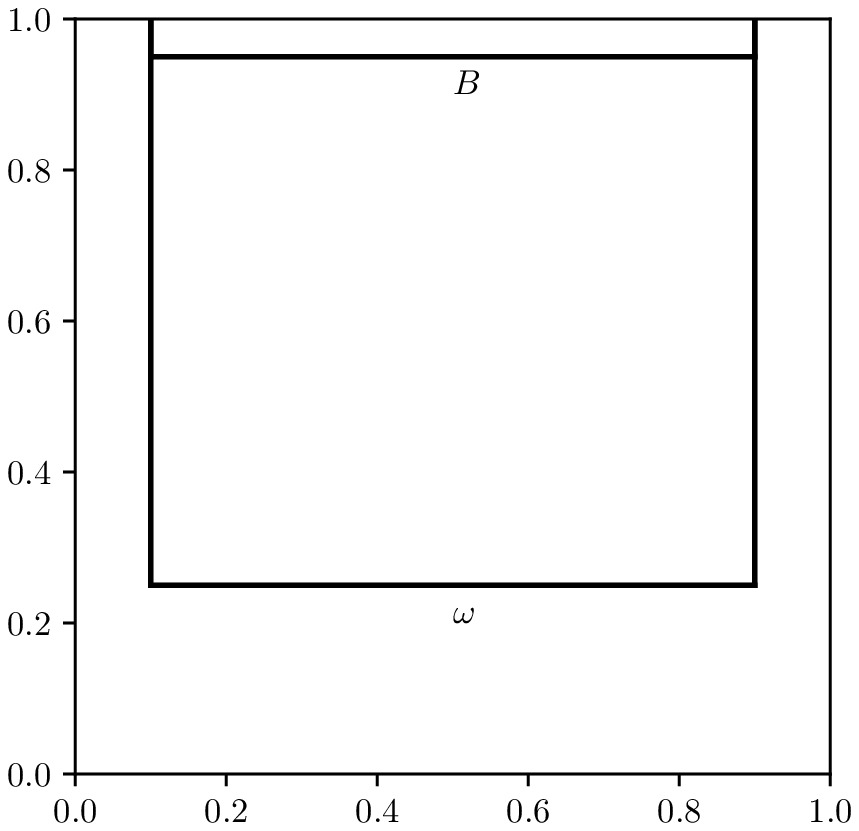}
		\caption{Convex direction (\ref{convex}).}
		\label{domain_conv}
	\end{subfigure}
	\begin{subfigure}[t]{0.32\textwidth}
		\includegraphics[draft=false, width=\textwidth]{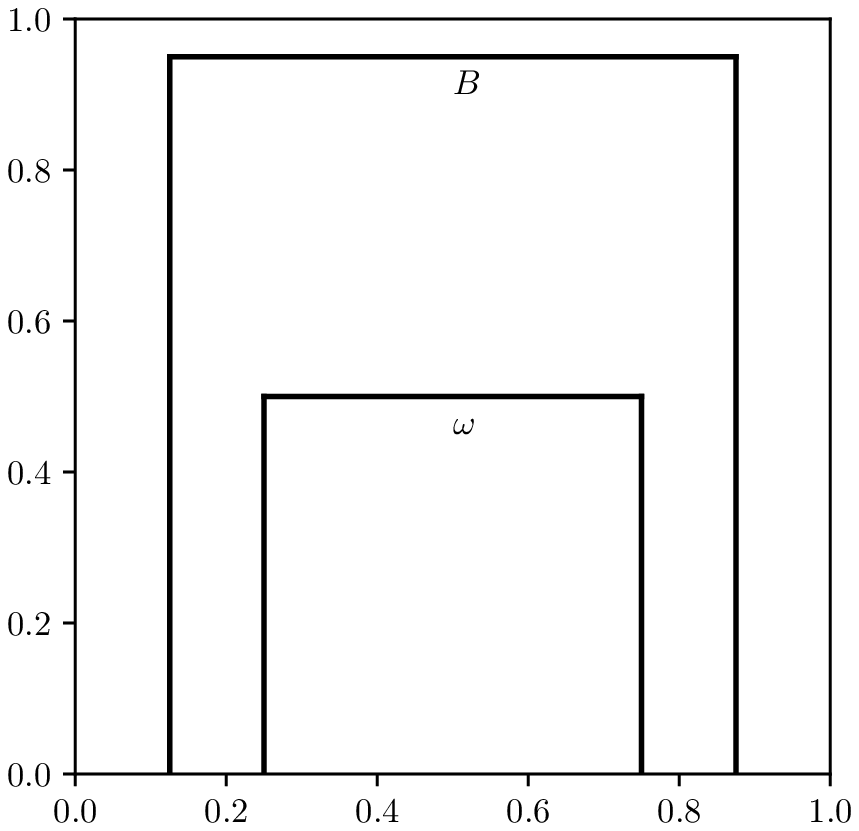}
		\caption{Non-convex direction (\ref{non-convex}).}
		\label{domain_nonconv}
	\end{subfigure}
	\hfill
	\begin{subfigure}[t]{0.32\textwidth}
		\includegraphics[draft=false, width=\textwidth]{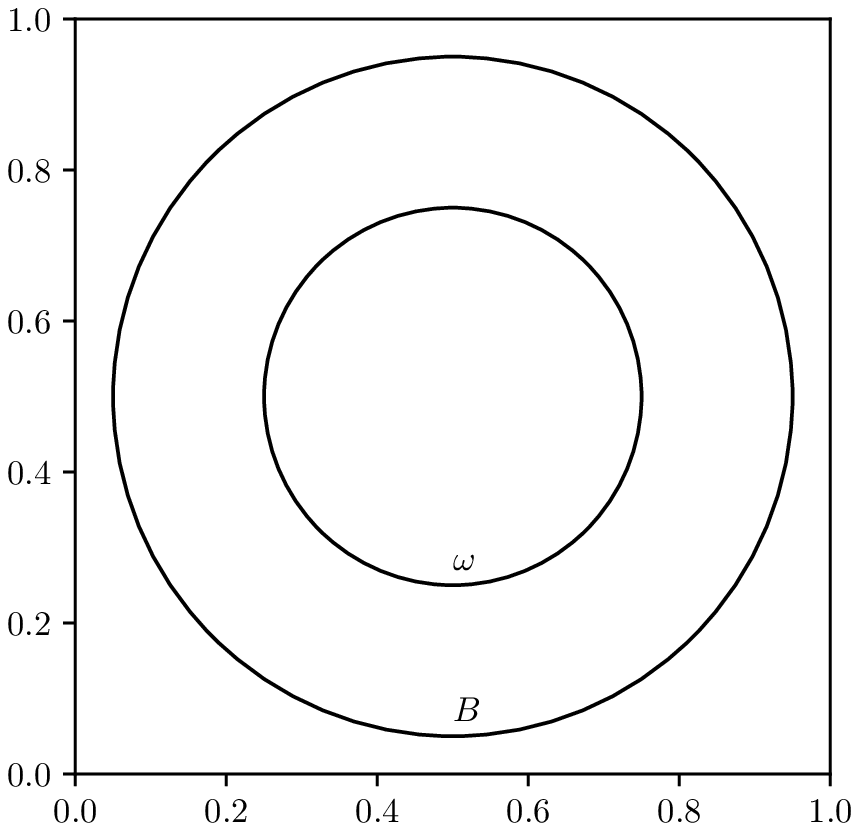}
		\caption{Non-convex direction (\ref{disk}).}
		\label{domain_disk}
	\end{subfigure}
	\caption{Computational domains for \cref{ex_gauss}.}
\end{figure}

\begin{figure}
	\begin{subfigure}[t]{0.48\textwidth}
		\includegraphics[draft=false, width=\textwidth]{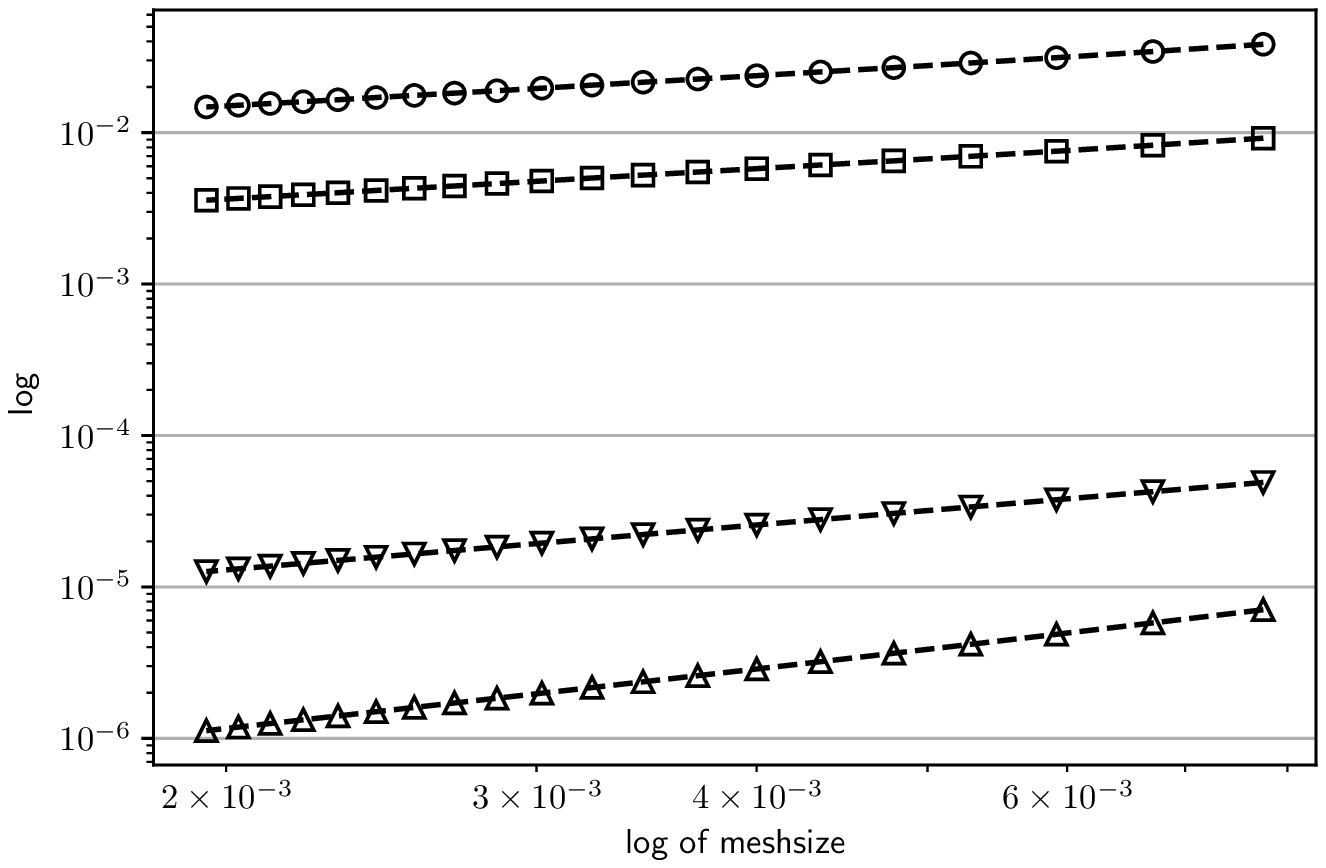}
		\caption{$k=10$. Circles: $H^1$-error, rate $\approx 0.64$; squares: $L^2$-error, rate $\approx 0.66$; down triangles: $h^{-1} \J(u_h,u_h)$, rate $\approx 1$; up triangles: $\norm{z}_W$, rate $\approx 1.3$.}
		\label{gauss_conv_10}
	\end{subfigure}
	\hfill
	\begin{subfigure}[t]{0.48\textwidth}
		\includegraphics[draft=false, width=\textwidth]{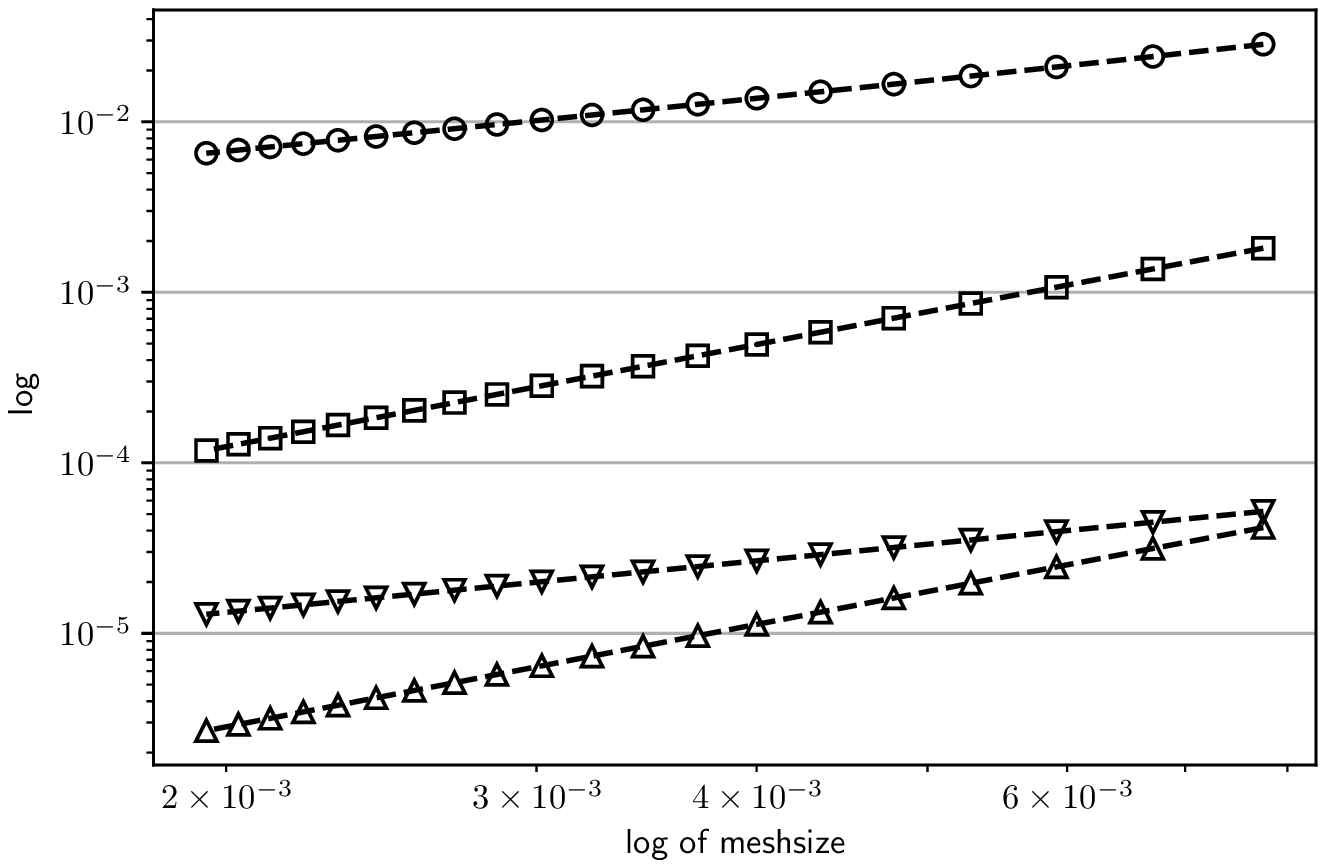}
		\caption{$k=50$. Circles: $H^1$-error, rate $\approx 1.02$; squares: $L^2$-error, rate $\approx 2$; down triangles: $h^{-1} \J(u_h,u_h)$, rate $\approx 1$; up triangles: $\norm{z}_W$, rate $\approx 2$.}
		\label{gauss_conv_50}
	\end{subfigure}
	\caption{Convergence in $B$ for \cref{ex_gauss} in the convex direction (\ref{convex}).}
\end{figure}

\begin{figure}
	\begin{subfigure}[t]{0.48\textwidth}
		\includegraphics[draft=false, width=\textwidth]{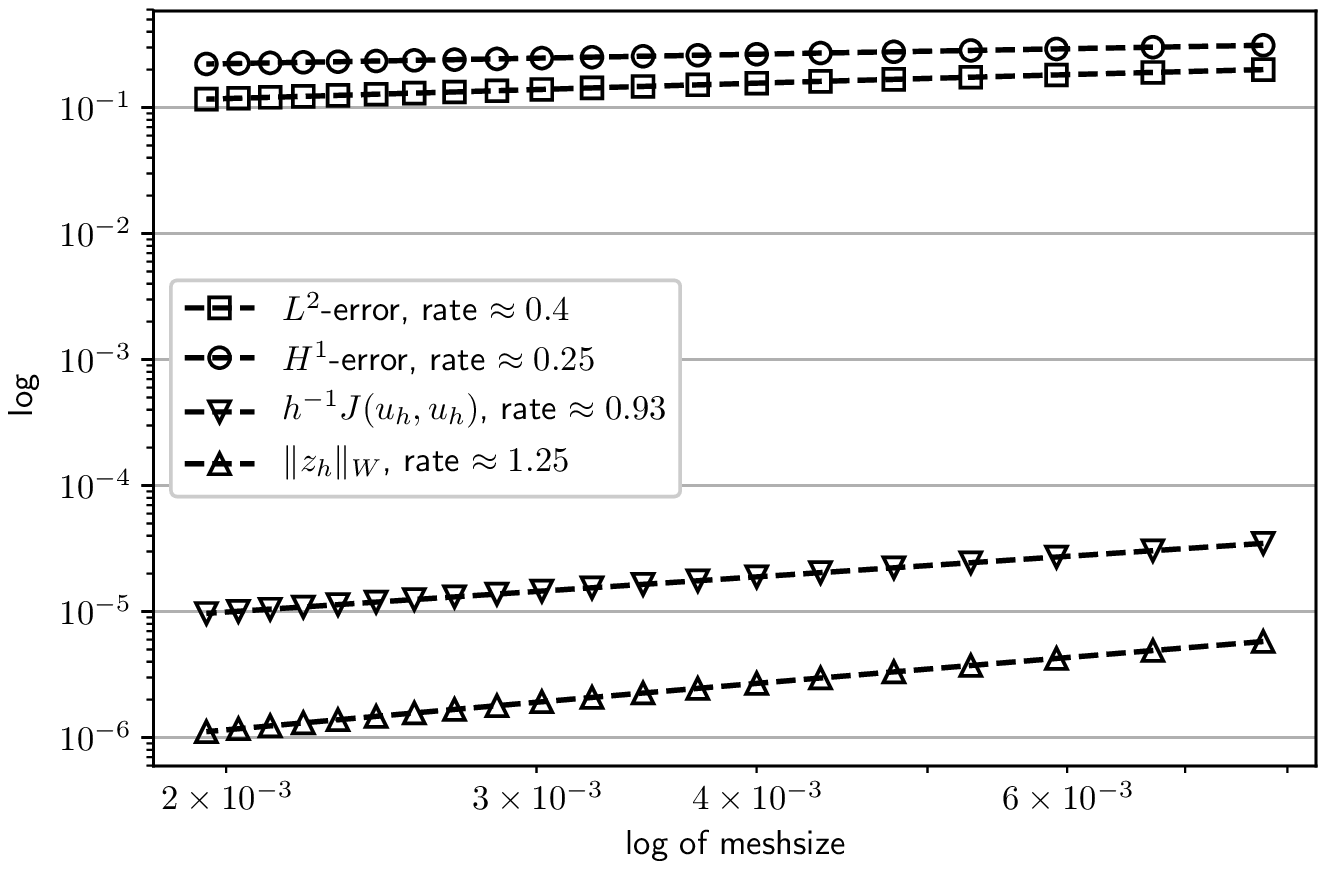}
		\caption{Non-convex direction (\ref{non-convex}).}
	\end{subfigure}
	\hfill
	\begin{subfigure}[t]{0.48\textwidth}
		\includegraphics[draft=false, width=\textwidth]{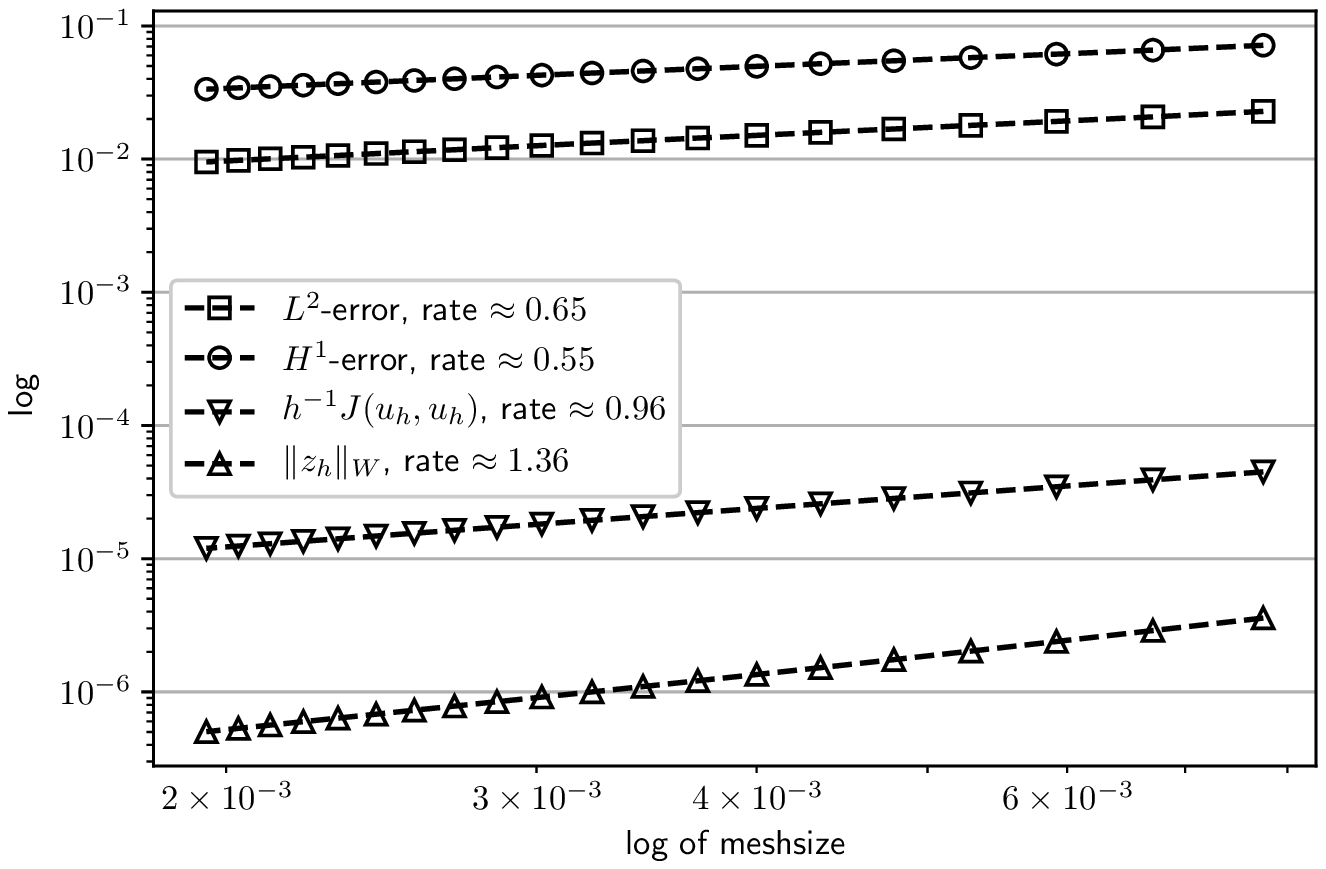}
		\caption{Non-convex direction (\ref{disk}).}
	\end{subfigure}
	\caption{Convergence in $B$ for \cref{ex_gauss}, $k=10$.}
	\label{gauss_nonconv}
\end{figure}

\begin{figure}
	\begin{subfigure}[t]{0.48\textwidth}
		\includegraphics[draft=false, width=\textwidth]{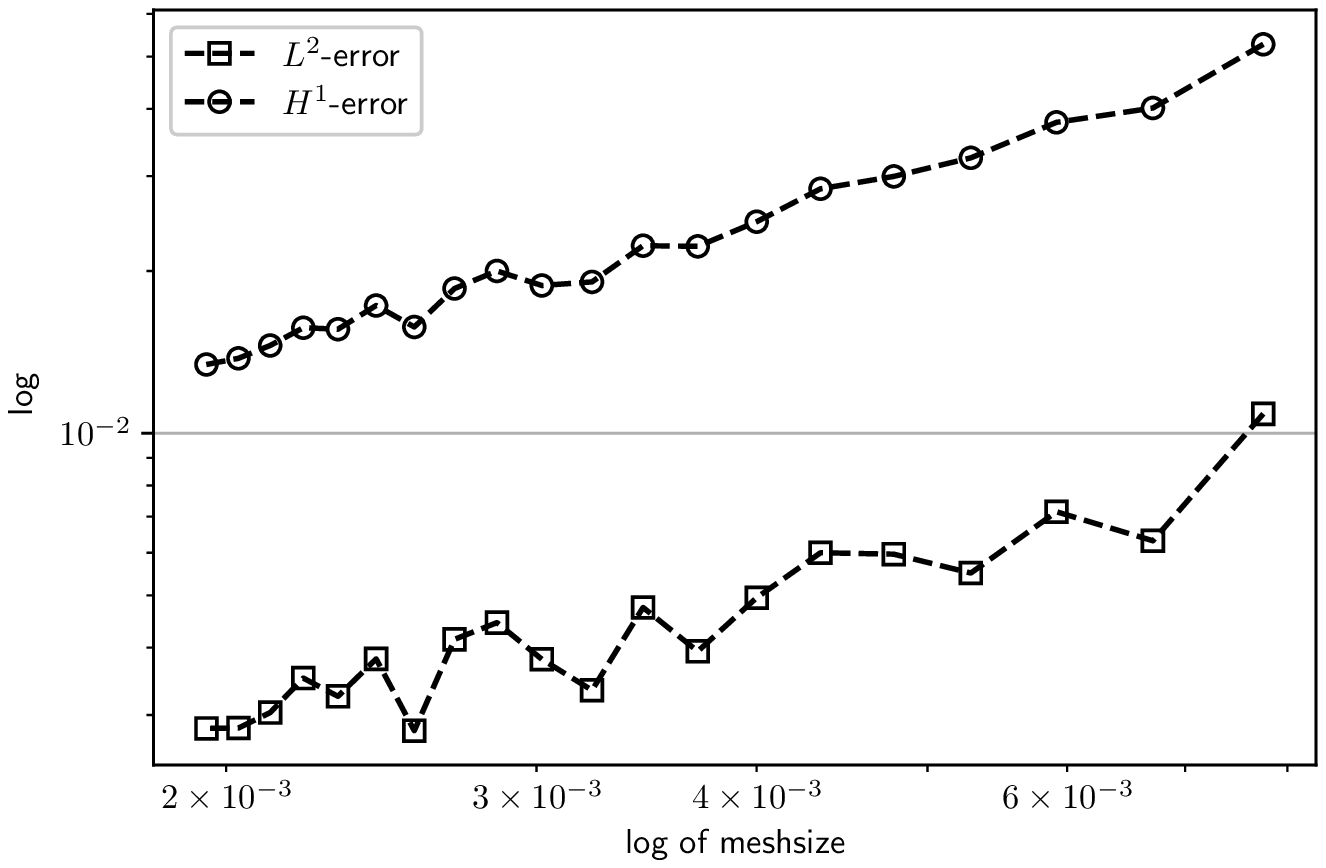}
		\caption{Perturbation $O(h)$.}
	\end{subfigure}
	\hfill
	\begin{subfigure}[t]{0.48\textwidth}
		\includegraphics[draft=false, width=\textwidth]{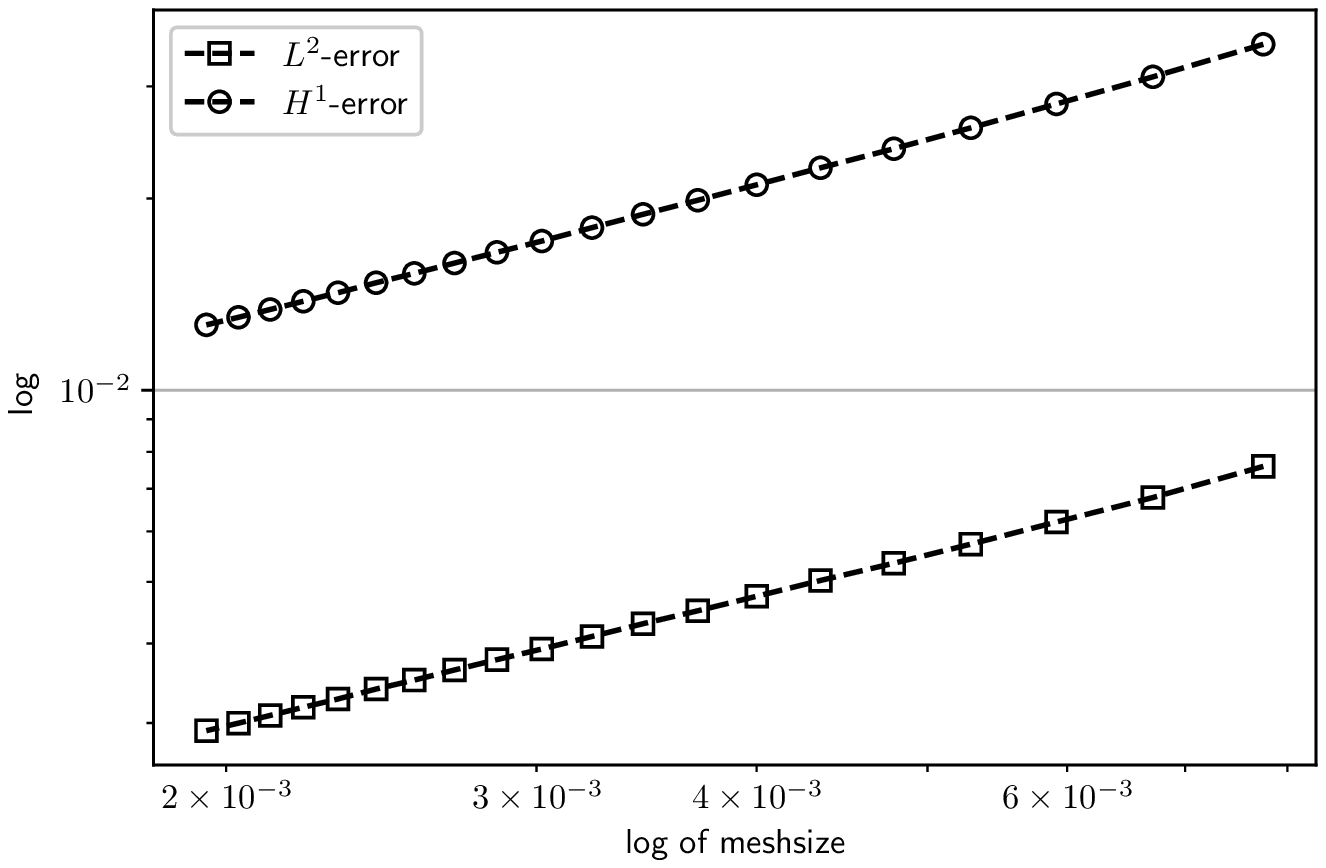}
		\caption{Perturbation $O(h^2)$.}
	\end{subfigure}
	\caption{Convergence in $B$ when perturbing $f$ and $q$ in \cref{ex_gauss} for (\ref{convex}), $k=10$.}
	\label{gauss_perturbation}
\end{figure}

Let us recall that the stability estimates for the unique continuation problem are closely related to those for the notoriously ill-posed Cauchy problem, see e.g. \cite{Alessandrini} or \cite{Isakov}. It is of interest to consider the following variation of a well-known example due to Hadamard, since this example can be used to show that conditional H\"older stability is optimal for the unique continuation problem.

\begin{example}\label{ex_hadamard} Let $n \in \N$ and consider the Cauchy problem 
	\begin{align*}
	\begin{cases}
	\Delta u + k^2 u = 0 \quad &\text{in } \Omega=(0,\pi)\times (0,1), \\
	u(x,0) = 0 \quad &\text{for } x\in [0,\pi], \\
	u_y (x,0) = \sin(nx) \quad &\text{for } x\in [0,\pi],
	\end{cases}
	\end{align*}
	whose solution for $n>k$ is given by $u = \frac{1}{\sqrt{n^2-k^2}} \sin(nx) \sinh(\sqrt{n^2-k^2} y)$, for $n=k$ by $u = \sin(kx) y$, and for $n<k$ by $u = \frac{1}{\sqrt{k^2-n^2}} \sin(nx) \sin(\sqrt{k^2-n^2}y)$. 
\end{example}

It can be seen in \cref{hadamard_conv_fig} that the convergence rates agree with the ones predicted for the convex setting
\begin{equation}\label{hadamard_conv}
\omega = \Omega \setminus [\pi/4,3\pi/4] \times [0,0.25],\quad
B = \Omega \setminus [\pi/4,3\pi/4] \times [0,0.95],
\end{equation}
i.e. sub-linear convergence for the relative error in the $L^2$- and $H^1$-norms, and quadratic convergence for $\J(u_h,u_h)$, although one can notice that the values of the jump stabilizer $\J(u_h,u_h)$ visibly increase compared to \cref{ex_gauss}.

When continuing the solution in the non-convex direction, the stability strongly deteriorates and for coarse meshes the numerical approximation doesn't reach the convergence regime, as it can be seen in \cref{hadamard_nonconv_fig} for the non-convex setting
\begin{equation}\label{hadamard_nonconv}
\omega = (\pi/4,3\pi/4) \times (0,0.5),\quad
B = (\pi/8,7\pi/8) \times (0,0.95).
\end{equation}  

\begin{figure}[t]
	\begin{subfigure}[t]{0.48\textwidth}
		\includegraphics[draft=false, width=\textwidth]{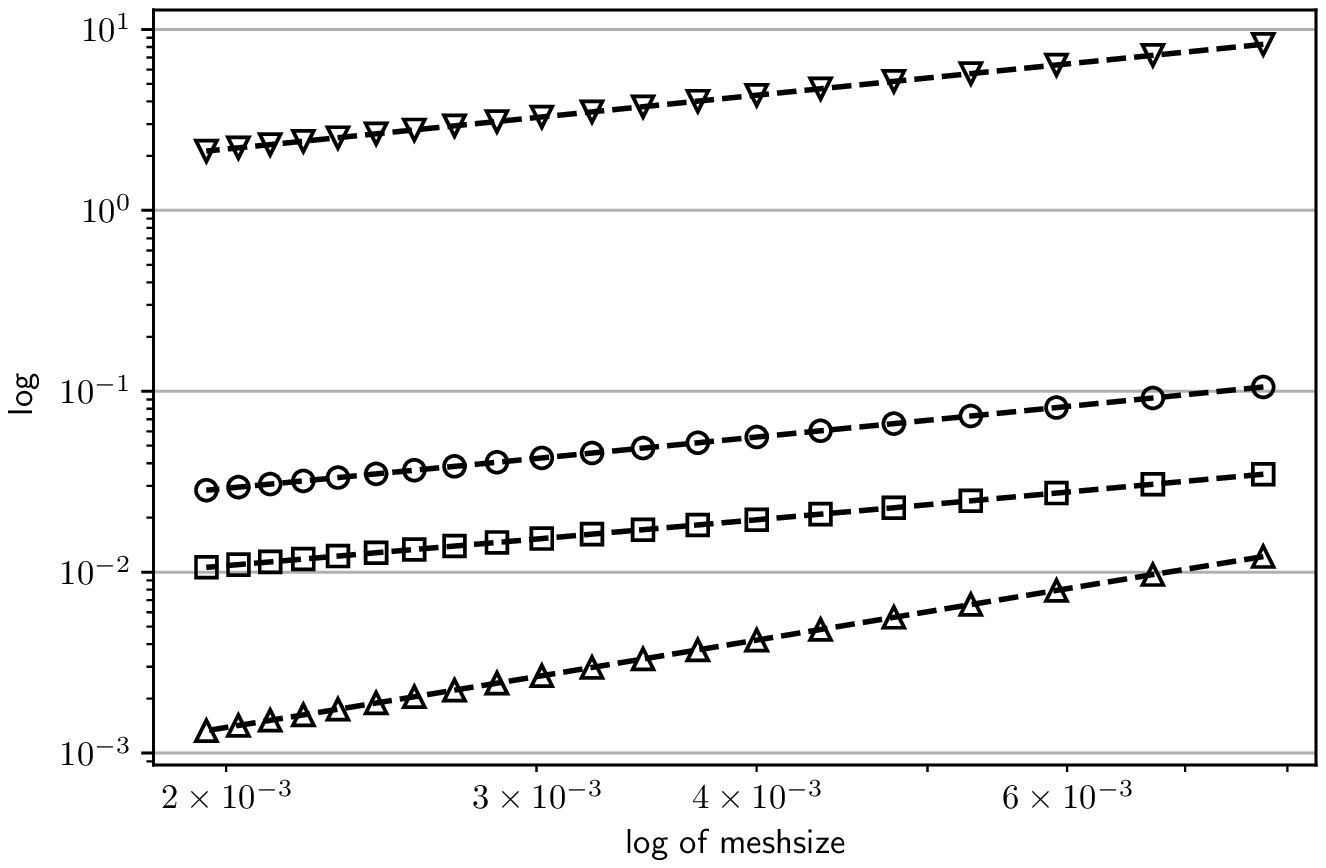}
		\caption{Convex direction (\ref{hadamard_conv}). Circles: $H^1$-error, rate $\approx 0.94$; squares: $L^2$-error, rate $\approx 0.83$; down triangles: $h^{-1} \J(u_h,u_h)$, rate $\approx 1$; up triangles: $\norm{z}_W$, rate $\approx 1.6$.}
		\label{hadamard_conv_fig}
	\end{subfigure}
	\hfill
	\begin{subfigure}[t]{0.48\textwidth}
		\includegraphics[draft=false, width=\textwidth]{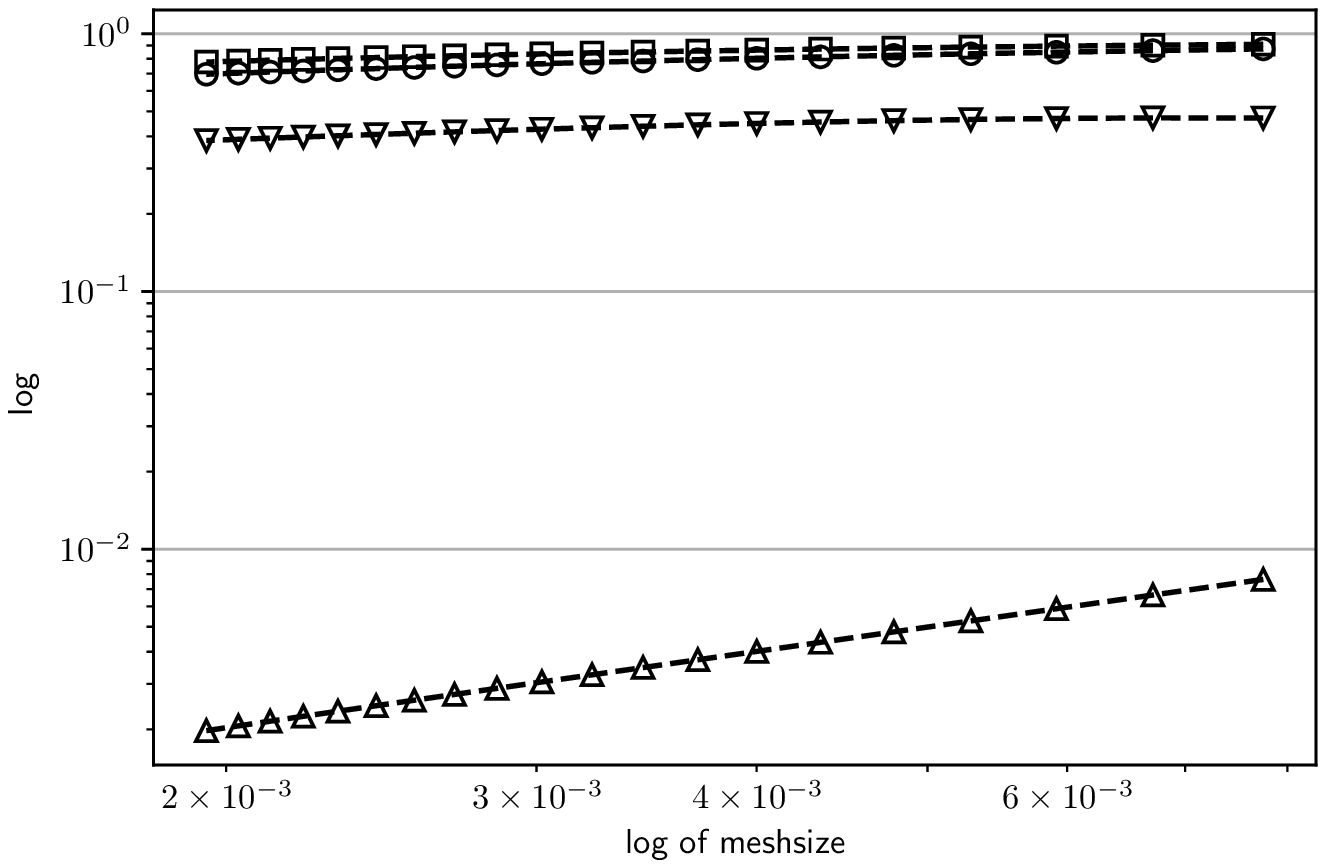}
		\caption{Non-convex direction (\ref{hadamard_nonconv}). Circles: $H^1$-error; squares: $L^2$-error; down triangles: $h^{-1} \J(u_h,u_h)$; up triangles: $\norm{z}_W$.}
		\label{hadamard_nonconv_fig}
	\end{subfigure}
	\caption{Convergence in $B$ for \cref{ex_hadamard}, $k=10$, $n=12$.}
\end{figure}

\appendix
\section{}\label{appendix}

\begin{example}\label{wkb}
	Consider the geometry $\Omega = (0,1)^2$, $\omega = (0,1) \times (0,\epsilon)$ and $B = (0,1) \times (0,1-\epsilon)$,
	and the ansatz $u(x,y) = e^{ikx} a(x,y)$. Let $n\in \N$ and $a(x,y) = a_0(x,y) + k^{-1} a_{-1}(x,y) + \ldots + k^{-n} a_{-n} (x,y)$.
	We have that 
	$$
	\Delta u + k^2 u = e^{ikx} \left( 2 i k \p_x a + \Delta a \right),
	$$
	and we choose $a_j$, $j=0,\ldots,-n$ such that
	\begin{equation}\label{transports}
	\p_x a_0 = 0,
	\quad 2i \p_x a_j + \Delta a_{j+1} = 0,
	\quad -j=1,\ldots,n.
	\end{equation}
	Then
	$$\Delta u + k^2 u = e^{ikx} k^{-n} \Delta a_{-n}$$
	and $\norm{\Delta u + k^2 u}_{L^2(\Omega)} = k^{-n} \norm{\Delta a_{-N}}_{L^2(\Omega)}$. Since $a_j$, $j=0,\ldots,-n$, are independent of $k$ we obtain
	$$\norm{\Delta u + k^2 u}_{L^2(\Omega)} = C k^{-n}.$$
	We can solve (\ref{transports}) such that $a_0(x,y)=a_0(y), \, \supp(a_0) \subset (\epsilon, 1-\epsilon)$ and $\supp(a) \subset [0,1] \times (\epsilon, 1-\epsilon)$.
	Then
	$$
	u|_{\omega} = 0,
	\quad \text{and } \norm{u}_{H^1(B)} = \norm{u}_{H^1(\Omega)} = C k,
	\quad \text{for large } k. 
	$$
	The estimate (\ref{stability_intro}) then becomes
	$$
	k \le C k^{-\alpha n} k^{1-\alpha}, \quad \text{i.e. } k^{\alpha (n+1)} \le C.
	$$
	Choosing large $n$ we see that $C$ depends on $k$, and for any $N \in \N$, $C \le k^N$ cannot hold.
\end{example}

\begin{proof}[Proof of \cref{lem_carleman_eq}]
	Recall the following identities for a function $w$ and vector fields $X$ and $Y$
	\begin{align*}
	\div (w X) 
	= (\grad w, X) + w \div X,
	\quad
	D^2 w(X, Y) = (D_X \grad w, Y),
	\end{align*}
	where $D_X$ is the covariant derivative.
	Recall also that the Hessian is symmetric, i.e. $D^2 w(X, Y) = D^2 w(Y, X)$. We have 
	$$
	e^{\ell} \Delta w
	= \Delta v + b + (q-k^2)v 
	= \Delta v - \sigma v - 2(\nabla v, \nabla \ell) + \sigma v - (\Delta \ell) v + |\nabla \ell|^2 v.
	$$
	Indeed
	\begin{align*}
	\Delta v 
	&= \div( \grad (e^\ell w)) 
	= \div (v \grad \ell + e^\ell \grad w)
	\\&= (\grad v, \grad \ell) + v \Delta \ell + (\grad e^\ell, \grad w) + e^\ell \Delta w
	\\&= 2(\grad v, \grad \ell) +  (\Delta \ell - |\nabla \ell|^2) v + e^\ell \Delta w,
	\end{align*}
	where we have used the identity
	\begin{align*}
	(\grad e^\ell, \grad w) &= (e^\ell \grad \ell, \grad w) 
	= (\grad \ell, \grad (e^\ell w)) - (\grad \ell, w \grad e^\ell)
	\\&= (\grad \ell, \grad v) - v |\grad \ell|^2.
	\end{align*}
	Thus 
	\begin{align}
	\label{squared}
	e^{2\ell} (\Delta w + k^2 w)^2/2
	&= (\Delta v  + b + q v)^2/2
	= (\Delta v  + q v)^2/2 + b^2/2
	+ b\Delta v + bqv,
	\end{align}
	and it remains to study the cross terms $b\Delta v$ and $bqv$. 
	
	Let us begin by studying $\beta \Delta v$ where $\beta = -2(\nabla v, \nabla \ell)$.
	We have
	\begin{align*}
	\beta \Delta v 
	&= \div(\beta \grad v) - (\grad \beta, \grad v)
	\end{align*}
	and
	\begin{align*}
	- (\grad \beta, \grad v) &= 2(\grad (\grad v, \grad \ell), \grad v) 
	= 2(D_{\grad v} \grad v, \grad \ell) + 2(\grad v, D_{\grad v} \grad \ell)
	\\&= 2D^2 v(\grad v, \grad \ell) + 2D^2 \ell(\grad v, \grad v).
	\end{align*}
	Finally 
	\begin{align}
	\label{Hess_expansion}
	2D^2 v(\grad v, \grad \ell) 
	&= 2D^2 v(\grad \ell, \grad v) = 2(D_{\grad \ell} \grad v, \grad v)
	= (\grad \ell, \grad |\grad v|^2)
	\\\nonumber&= \div(|\grad v|^2 \grad \ell) - |\grad v|^2 \Delta \ell.
	\end{align}
	To summarize, for $\beta = -2(\nabla v, \nabla \ell)$
	it holds that 
	\begin{align}
	\label{crossterm1}
	\beta \Delta v 
	&= - \Delta \ell |\grad v|^2 + 2D^2 \ell(\grad v, \grad v)
	+ \div(\beta \nabla v + |\grad v|^2 \grad \ell).
	\end{align}
	
	Consider now $\beta \Delta v$ where $\beta = -\sigma v$.
	We have 
	\begin{align*}
	- (\grad \beta, \grad v)
	= (\grad \sigma, \grad v) v + \sigma |\nabla v|^2,
	\end{align*}
	whence for
	$\beta = -\sigma v$
	it holds that 
	\begin{align}
	\label{crossterm2}
	\beta \Delta v 
	&= \sigma |\grad v|^2 
	+ \div(\beta \nabla v) + (\grad \sigma, \grad v) v.
	\end{align}
	Now (\ref{crossterm1}) and (\ref{crossterm2}) imply 
	\begin{align}
	\label{crossterm12}
	b \Delta v 
	&= a |\grad v|^2 + 2D^2 \ell(\grad v, \grad v)
	+ \div(b \nabla v + c_0) + R_0,
	\end{align}
	where $c_0 = |\grad v|^2 \grad \ell$ and $R_0 = (\grad \sigma, \grad v) v$.
	
	Let us now study the second cross term in (\ref{squared}).
	We have
	\begin{align*}
	-2 (\grad v, \grad \ell) q v  
	= -(\grad v^2, q \grad \ell) 
	=  v^2 \div (q \grad \ell) - \div( v^2 q \grad \ell),
	\end{align*}
	whence, recalling that  $q = k^2 + a + |\nabla \ell|^2$
	and $-a = -\sigma + \Delta \ell$,
	\begin{align}
	\label{crossterm3}
	b q v &= 
	(-\sigma q + \div (q \grad \ell))v^2 + \div c_1
	\\\nonumber&= (-|\nabla \ell|^2 \sigma + \div (|\nabla \ell|^2 \grad \ell))v^2  - k^2 a v^2 + \div c_1 + R_1,
	\end{align}
	where $c_1 = - qv^2 \grad \ell$ and $R_1 = (\div (a \grad \ell) - a\sigma)v^2$.
	The identity (\ref{Hess_expansion}) with $v = \ell$
	implies that 
	$$
	\div (|\nabla \ell|^2 \grad \ell) 
	= 2 D^2 \ell(\nabla \ell, \nabla \ell) + |\nabla \ell|^2 \Delta \ell,
	$$
	whence, recalling that $\sigma = a + \Delta \ell$,
	\begin{align}
	\label{zeroth_coeff}
	-|\nabla \ell|^2 \sigma + \div (|\nabla \ell|^2 \grad \ell)
	= -a |\nabla \ell|^2 + 2 D^2 \ell(\nabla \ell, \nabla \ell).
	\end{align}
	The claim follows by combining (\ref{zeroth_coeff}),
	(\ref{crossterm3}), (\ref{crossterm12}) and (\ref{squared}).
\end{proof}

\textbf{Acknowledgements.} Erik Burman was supported by EPSRC
grants EP/P01576X/1 and EP/P012434/1. Lauri Oksanen was supported by EPSRC grants EP/L026473/1 and EP/P01593X/1.
\bibliographystyle{abbrv}
\bibliography{biblio}

\begin{thebibliography}{10}

\bibitem{Alessandrini}
G.~Alessandrini, L.~Rondi, E.~Rosset, and S.~Vessella.
\newblock The stability for the {C}auchy problem for elliptic equations.
\newblock {\em Inverse Problems}, 25:123004, 2009.

\bibitem{BabuskaRev}
I.~M. Babu{\v{s}}ka and S.~A. Sauter.
\newblock Is the pollution effect of the {FEM} avoidable for the {H}elmholtz
  equation considering high wave numbers?
\newblock {\em SIAM J. Numer. Anal.}, 34(6):2392--2423, 1997.

\bibitem{Spence2016}
D.~Baskin, E.~A. Spence, and J.~Wunsch.
\newblock Sharp high-frequency estimates for the {H}elmholtz equation and
  applications to boundary integral equations.
\newblock {\em SIAM J. Math. Anal.}, 48(1):229--267, 2016.

\bibitem{Bourgeois}
L.~Bourgeois and J.~Dard{\'e}.
\newblock About stability and regularization of ill-posed elliptic {C}auchy
  problems: the case of {L}ipschitz domains.
\newblock {\em Appl. Anal.}, 89(11):1745--1768, 2010.

\bibitem{BurmanSIAM}
E.~Burman.
\newblock Stabilized finite element methods for nonsymmetric, noncoercive, and
  ill-posed problems. {P}art {I}: elliptic equations.
\newblock {\em SIAM J. Sci. Comput.}, 35(6):A2752--A2780, 2013.

\bibitem{BurmanComptes}
E.~Burman.
\newblock Error estimates for stabilized finite element methods applied to
  ill-posed problems.
\newblock {\em C. R. Math. Acad. Sci. Paris}, 352(7):655--659, 2014.

\bibitem{BurmanChapter}
E.~Burman.
\newblock Stabilised finite element methods for ill-posed problems with
  conditional stability.
\newblock In {\em Building bridges: connections and challenges in modern
  approaches to numerical partial differential equations}, volume 114 of {\em
  Lect. Notes Comput. Sci. Eng.}, pages 93--127. Springer, 2016.

\bibitem{BHL2018}
E.~Burman, P.~Hansbo, and M.~G. Larson.
\newblock Solving ill-posed control problems by stabilized finite element
  methods: an alternative to {T}ikhonov regularization.
\newblock {\em Inverse Problems}, 34:035004, 2018.

\bibitem{BNO2018}
E.~Burman, M.~Nechita, and L.~Oksanen.
\newblock A stabilized finite element method for inverse problems subject to
  the convection--diffusion equation. {I}: diffusion-dominated regime.
\newblock {\em in preparation}.

\bibitem{BurmanOksanen}
E.~Burman and L.~Oksanen.
\newblock Data assimilation for the heat equation using stabilized finite
  element methods.
\newblock {\em Numer. Math.}, 139(3):505--528, 2018.

\bibitem{BurmanHelmholtz}
E.~Burman, H.~Wu, and L.~Zhu.
\newblock Linear continuous interior penalty finite element method for
  {H}elmholtz equation with high wave number: one-dimensional analysis.
\newblock {\em Numer. Methods Partial Differential Equations},
  32(5):1378--1410, 2016.

\bibitem{DKSU}
D.~Dos Santos~Ferreira, C.~E. Kenig, M.~Salo, and G.~Uhlmann.
\newblock Limiting {C}arleman weights and anisotropic inverse problems.
\newblock {\em Invent. Math.}, 178(1):119--171, 2009.

\bibitem{ErnBook}
A.~Ern and J.-L. Guermond.
\newblock {\em Theory and practice of finite elements}, volume 159 of {\em
  Applied Mathematical Sciences}.
\newblock Springer-Verlag, New York, 2004.

\bibitem{FreeFem}
F.~Hecht.
\newblock New development in {F}ree{F}em++.
\newblock {\em J. Numer. Math.}, 20(3-4):251--265, 2012.

\bibitem{HormanderVol3}
L.~H\"ormander.
\newblock {\em The analysis of linear partial differential operators}, volume
  III.
\newblock Springer-Verlag, 1985.

\bibitem{HormanderVol4}
L.~H\"ormander.
\newblock {\em The analysis of linear partial differential operators},
  volume~IV.
\newblock Springer-Verlag, 1985.

\bibitem{Isakov}
T.~Hrycak and V.~Isakov.
\newblock Increased stability in the continuation of solutions to the
  {H}elmholtz equation.
\newblock {\em Inverse Problems}, 20(3):697--712, 2004.

\bibitem{IhlenburgBabuska1}
F.~Ihlenburg and I.~Babu\v{s}ka.
\newblock Finite element solution of the {H}elmholtz equation with high wave
  number. {I}. {T}he {$h$}-version of the {FEM}.
\newblock {\em Comput. Math. Appl.}, 30(9):9--37, 1995.

\bibitem{IhlenburgBabuska2}
F.~Ihlenburg and I.~Babu\v{s}ka.
\newblock Finite element solution of the {H}elmholtz equation with high wave
  number. {II}. {T}he {$h$}-{$p$} version of the {FEM}.
\newblock {\em SIAM J. Numer. Anal.}, 34(1):315--358, 1997.

\bibitem{IsakovBook}
V.~Isakov.
\newblock {\em Inverse problems for partial differential equations}, volume 127
  of {\em Applied Mathematical Sciences}.
\newblock Springer, 3rd edition, 2017.

\bibitem{John}
F.~John.
\newblock Continuous dependence on data for solutions of partial differential
  equations with a prescribed bound.
\newblock {\em Comm. Pure Appl. Math.}, 13:551--585, 1960.

\bibitem{LeRousseau}
J.~Le~Rousseau and G.~Lebeau.
\newblock On {C}arleman estimates for elliptic and parabolic operators.
  {A}pplications to unique continuation and control of parabolic equations.
\newblock {\em ESAIM Control Optim. Calc. Var.}, 18(3):712--747, 2012.

\bibitem{Liu}
S.~Liu and L.~Oksanen.
\newblock A {L}ipschitz stable reconstruction formula for the inverse problem
  for the wave equation.
\newblock {\em Trans. Amer. Math. Soc.}, 368(1):319--335, 2016.

\bibitem{Melenk}
J.~M. Melenk and S.~Sauter.
\newblock Wavenumber explicit convergence analysis for {G}alerkin
  discretizations of the {H}elmholtz equation.
\newblock {\em SIAM J. Numer. Anal.}, 49(3):1210--1243, 2011.

\bibitem{TataruRev}
D.~Tataru.
\newblock Unique continuation problems for partial differential equations.
\newblock In C.~B. Croke, M.~S. Vogelius, G.~Uhlmann, and I.~Lasiecka, editors,
  {\em Geometric Methods in Inverse Problems and PDE Control}, pages 239--255.
  Springer, New York, NY, 2004.

\bibitem{Wu}
H.~Wu.
\newblock Pre-asymptotic error analysis of {CIP-FEM} and {FEM} for the
  {H}elmholtz equation with high wave number. {P}art {I}: linear version.
\newblock {\em IMA J. Numer. Anal.}, 34(3):1266--1288, 2014.

\bibitem{Zworski}
M.~Zworski.
\newblock {\em Semiclassical analysis}, volume 138 of {\em Graduate Studies in
  Mathematics}.
\newblock American Mathematical Society, Providence, RI, 2012.

\end{thebibliography}
	
\end{document}